\documentclass[11pt]{amsart}

\usepackage[a4paper,margin=1.15in]{geometry}
\usepackage{amsmath,amssymb,amsthm,mathtools}
\usepackage{esint}
\usepackage{mathrsfs}
\usepackage{enumitem}
\usepackage{microtype}
\usepackage[hidelinks]{hyperref}
\usepackage{color}

\allowdisplaybreaks
\numberwithin{equation}{section}
\newtheorem{theorem}{Theorem}[section]
\newtheorem{proposition}[theorem]{Proposition}
\newtheorem{lemma}[theorem]{Lemma}
\newtheorem{corollary}[theorem]{Corollary}
\theoremstyle{remark}
\newtheorem*{remark}{Remark}

\newcommand{\R}{\mathbb R}
\newcommand{\Sn}{\mathbb S^{n-1}}

\newcommand{\osc}{\operatorname{osc}}
\newcommand{\dist}{\operatorname{dist}}
\newcommand{\PV}{\operatorname{P.V.}}

\newcommand{\E}{\mathcal E}

\title[A quantitative Gidas--Ni--Nirenberg theorem]{A Quantitative Gidas--Ni--Nirenberg Theorem for the Fractional Laplacian}
\author[Q.~Shen]{Qizhen Shen}
\address{School of Mathematical Sciences\\
Shanghai Jiao Tong University\\
Shanghai, 200240, China} 
\email{zhuimengtiantang@sjtu.edu.cn}
\author[J.~Xie]{Jiongduo Xie}
\address{School of Mathematical Sciences\\
Shanghai Jiao Tong University\\
Shanghai, 200240, China}
\email{jiongduoxie@outlook.com}
\date{}

\subjclass[2020]{35R11, 35B06, 35B35, 35B50}
\keywords{Fractional Laplacian, quantitative symmetry, Gidas--Ni--Nirenberg theorem, moving planes, antisymmetric Harnack inequality, stability}

\begin{document}

\begin{abstract}
We establish a quantitative Gidas--Ni--Nirenberg theorem for the pure fractional
Dirichlet problem
\[
(-\Delta)^s u=k(x)g(u)\quad\text{in }B_1,
\qquad
u=0\quad\text{in }\mathbb R^n\setminus B_1,
\qquad 0<s<1.
\]
For positive bounded solutions under a two-sided $L^\infty$ normalization, we prove
\[
\mathcal D(u)\le C\,\mathcal D(k)^\gamma,
\qquad 0<\gamma<1,
\]
where $\mathcal D$ measures spherical oscillation and outward radial increase. %Thus, a purely zero-order defect of the coefficient quantitatively controls both radial symmetry and radial monotonicity, without requiring derivatives of $k$. 
To the best of our knowledge, this provides the first quantitative Gidas--Ni--Nirenberg estimate for the pure fractional Dirichlet problem. The key new ingredient is a shift-uniform, weak Harnack inequality for shifted antisymmetric supersolutions, which overcomes the exterior-tail obstruction in the quantitative moving-plane argument. This mechanism extends to nonseparable nonlinearities.
\end{abstract}
\maketitle
\enlargethispage{3pt}

\section{Introduction}

In recent years, there has been growing interest in qualitative studies such as symmetry and monotonicity \cite{BN91,CGS89,CFMN18,CGY26,CL91,DKW11,GNN79,GNN81,Li96,Savin09} for local and nonlocal elliptic equations. The moving-plane method is a fundamental technique based on reflection arguments for deriving rigidity from geometric invariance in elliptic problems. This reflection argument was first introduced by Alexandrov in his study of constant-mean-curvature hypersurfaces \cite{Alex62}. Serrin then applied it to overdetermined problems \cite{Serrin71}. Later, Gidas--Ni--Nirenberg used it to prove radial symmetry for positive solutions in balls \cite{GNN79,GNN81}. Important refinements include \cite{BN91,CL91,Dancer92}. For a quantitative perspective, see \cite{CR18}. When the symmetry of the problem is destroyed, due to a small perturbation of the operator, the domain, or the nonlinear term, the quantitative deviation of the solution from symmetry has recently attracted considerable interest in both the local \cite{CCPP24,CL25,Gatti25} and the mixed local–nonlocal settings \cite{Biswas26}. However, such results for pure nonlocal problems remain unexplored to the best of our knowledge.

Our main aim in this paper is to establish the quantitative symmetry of positive solutions for the Dirichlet problem involving the fractional Laplacian. The study of quantitative symmetry in nonlocal problems has arisen mainly from geometric rigidity, including sharp stability for nonlocal mean curvature \cite{CFMN18} and the overdetermined problem \cite{CDPPV23,DPTV23Serrin,GSW26}.  For the qualitative background of nonlocal equations, we refer to \cite{BMS18,CLL17,CL005,CLO06,CLZ17,DQ18,Dou16,FQJ12,FW14,CDPPV23,Jarohs16,JW16,LZ11}.
 
%For fractional equations, the function on one side of a hyperplane is connected to the other side through the full kernel. Thus, the reflection method is inherently global. 
%Early work on fractional symmetry includes \cite{BLW05,CL005,CLO06}. Later, moving-plane and antisymmetric maximum-principle methods were developed for fractional Laplacian and more general nonlocal operators \cite{BMS18,CL18,CLL17,CLZ17,DQ18,Dou16,FQJ12,FW14,GSGL23,Jarohs16,JW16,LZ11}. Recent work has clarified the role of antisymmetric functions and their Harnack theory \cite{DKTV25,DPTV24,DTV23}. Related reflection principles also apply to other operators, such as the logarithmic Laplacian \cite{PS25}. A recent survey of qualitative methods for fractional equations is \cite{CGL26}. These results provide the qualitative background for this paper.

\subsection{The model problem and the solution class}
\par
The fractional Laplacian has recently attracted considerable attention as a modeling tool for a wide range of physical phenomena involving long-range interactions. These include anomalous diffusion, quasi-geostrophic flows, turbulence, water waves, and molecular dynamics (see \cite{BG90, CV10, C06, TZ06} and the references therein). In particular, it appears as the infinitesimal generator of a stable Lévy diffusion process \cite{Be96}, and thus plays a key role in describing anomalous diffusion observed in plasmas and other settings.

The fractional Laplacian can be defined as
\begin{eqnarray}\label{eq1-1}
\begin{aligned}
(-\Delta)^s u(x)&=c_{n, s}\,\PV\! \int_{\mathbb{R}^n}\frac{u(x)-u(y)}{|x-y|^{n+2s}}\,dy,\\
%&=C_{n, s}PV \lim_{\varepsilon \to 0}\int_{\mathbb{R}^n\backslash B_\varepsilon(x)}\frac{u(x)-u(y)}{|x-y|^{n+2s}}dy,
\end{aligned}
\end{eqnarray}
where $0<s<1$, $\,\PV\!$ stands for the Cauchy principal value and
%$B_{\varepsilon}(x)$ is the ball of radius $\varepsilon$ centered at $x,$ and 
$c_{n, s}$ is a dimensional constant that depends on $n$ and $s$, precisely given by
$$
c_{n,s} = \frac{2^{2s} s \Gamma\left(\frac{n}{2}+s\right)}{\pi^{\frac{n}{2}}\Gamma(1-s)}.
$$
Here, $\Gamma(\cdot)$ denotes the Gamma function. For \eqref{eq1-1} to be pointwise well-defined, we assume $u\in C^{1,1}_{loc}(\mathbb{R}^n)\cap \mathcal{L}_{2s}$, where
$$
\mathcal{L}_{2s}=\left\{u\in L_{loc}^1 \,\Big| \int_{\mathbb{R}^n}\frac{|u(x)|}{1+|x|^{n+2s}}dx<\infty \right\}.
$$

In this paper, we study the Dirichlet problem
\begin{equation}\label{eq:main}
\begin{cases}
(-\Delta)^s u=k(x)g(u),&x\in B_1,\\
u>0,&x\in B_1,\\
u=0,&x\in\R^n\setminus B_1,
\end{cases}
\end{equation}
where $n\ge2$ and $0<s<1$. Set
\[
X_0^s(B_1):=\{u\in H^s(\R^n):u=0\ \text{a.e.\ in }\R^n\setminus B_1\}.
\]
We call $u$ a classical solution of \eqref{eq:main} if $u\in X_0^s(B_1)\cap C^{1,1}_{\rm loc}(B_1)$ satisfies \eqref{eq:main}.

In the following, we denote
$t_+:=\max\{t,0\}$ and $t_-:=\max\{-t,0\}.$
For a measurable set $E$, we define
$$\osc_Ev:=\sup_Ev-\inf_Ev.$$ 

For a bounded function $v$ in $B_1$, the \emph{defect} of $v$ is defined by 
\begin{equation}\label{eq:raddef}
\mathcal D(v)
:=\sup_{0<r<1}\osc_{\partial B_r}v
+\sup_{e\in\Sn}\sup_{0\le\rho<r<1}
\bigl(v(re)-v(\rho e)\bigr)_+.
\end{equation}
Its first term measures angular
oscillation and its second term measures increase along rays from the origin. Thus,
\[
\mathcal D(v)=0
\quad\Longleftrightarrow\quad
v\ \text{is radial and nonincreasing in }|x|.
\]

Our main result is the following.

\begin{theorem}[Quantitative radial symmetry and monotonicity]\label{thm:main}
Let $n\ge2$, $0<s<1$, 
$
k\in C(B_1;[0,\infty))\cap L^\infty(B_1)
$ and $
g\in C^{0,1}_{\mathrm{loc}}([0,\infty);[0,\infty)).
$
Assume $u$ to be a classical solution of \eqref{eq:main} satisfying
\begin{equation}\label{eq:norm}
\frac1{C_0}\le \|u\|_{L^\infty(B_1)}\le C_0,
\end{equation}
for some $C_0\ge1$. Then there exist $C>0$ and $\gamma\in(0,1)$, depending only on $n,\ s,\ C_0,\ $ $\|g\|_{C^{0,1}([0,C_0])}$ and $\|k\|_{L^\infty(B_1)},$ such that
\begin{equation}\label{eq:mainest}
\mathcal D(u)\le C\,\mathcal D(k)^\gamma.
\end{equation}
\end{theorem}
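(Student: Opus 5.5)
The plan is a quantitative moving-plane argument in every direction $e\in\Sn$. Write $\delta:=\mathcal D(k)$, and assume $\delta\le\delta_0$ small, since otherwise \eqref{eq:mainest} follows from $\mathcal D(u)\le 3C_0$. For $\lambda\in(0,1)$ let $T_\lambda=\{x\cdot e=\lambda\}$, $\Sigma_\lambda=\{x\cdot e>\lambda\}$, $x^\lambda$ the reflection of $x$ in $T_\lambda$, and $w_\lambda(x)=u(x^\lambda)-u(x)$. Then $w_\lambda$ is antisymmetric with respect to $T_\lambda$ and satisfies, in $\Sigma_\lambda\cap B_1$,
\[
(-\Delta)^s w_\lambda=c_\lambda(x)\,w_\lambda+k(x^\lambda)g(u(x^\lambda))-k(x)g(u(x^\lambda)),
\]
with $|c_\lambda|\le\|k\|_\infty\|g\|_{C^{0,1}([0,C_0])}$. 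For $x\in\Sigma_\lambda$ with $\lambda\ge0$ we have $|x^\lambda|\le|x|$. Using the definition of $\mathcal D(k)$ (oscillation on spheres plus radial increase), one gets $k(x)\le k(x^\lambda)+\delta$. Hence the error term $f_\lambda:=\bigl(k(x^\lambda)-k(x)\bigr)g(u(x^\lambda))$ satisfies $f_\lambda\ge -C\delta$. So $w_\lambda$ is an antisymmetric supersolution up to an $O(\delta)$ error. The goal is the quantitative statement
\[
w_\lambda\ge -C\delta^\gamma\quad\text{in }\Sigma_\lambda,\ \text{for all }\lambda\in(0,1),\ e\in\Sn .
\]

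\textbf{Step 1 (start).} For $\lambda$ close to $1$, $\Sigma_\lambda\cap B_1$ is a narrow region. A narrow-region antisymmetric maximum principle with right-hand side, applied to $w_\lambda$, gives $\inf w_\lambda\ge -C\delta$.

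\textbf{Step 2 (continuation).} Let $\bar\lambda$ be the infimum of those $\lambda'$ such that $w_\lambda\ge -M\delta^{\gamma}$ for all $\lambda\ge\lambda'$, and suppose $\bar\lambda>0$. In the qualitative proof one uses strict positivity of $w_{\bar\lambda}$ away from $T_{\bar\lambda}$. Here I would instead use a \emph{shifted} function $v=w_\lambda+M\delta^\gamma\chi$, suitably made antisymmetric, which is an antisymmetric supersolution. The key tool I would prove is a weak Harnack inequality for such shifted antisymmetric supersolutions, uniform in the shift:
\[
\inf_{K}\frac{v}{x\cdot e-\lambda}\ \ge\ c\fint_{K'} v_+\;-\;C\bigl(\delta+\text{shift}\bigr),
\]
on compact pieces $K,K'$ of $\Sigma_\lambda\cap B_1$. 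The positive mass in $K'$ comes from a nondegeneracy estimate. Since $\|u\|_\infty\ge1/C_0$ and $u>0$ near the origin with a quantitative lower bound (from Green function comparison), for $\bar\lambda\ge\eta$ the function $w_{\bar\lambda}$ has mass of order $c(\eta)>0$ in a fixed interior subset. Combining this with the boundary behaviour near $\partial B_1$ (Hopf-type via $u\asymp d^s$ and a barrier) and a narrow-region principle in the remaining thin layer, one pushes the plane further, to $\bar\lambda-\epsilon$, with the error $M\delta^\gamma$ reproduced rather than growing.

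\textbf{Step 3 (near $\lambda=0$).} The difficulty near $\lambda=0$ is that the nondegenerate mass may degenerate like $\lambda^{a}$. I would balance the two effects by choosing $\eta=\delta^{\beta}$. For $\lambda\ge\eta$ the Step 2 argument gives errors of order $\delta\,\eta^{-A}$. For $\lambda\in(0,\eta)$ one uses the Hölder continuity of $u$ (global $C^s$ bound from the $L^\infty$ normalization), which gives $|w_\lambda-w_\eta|\le C\eta^s$. Optimizing over $\beta$ yields $w_\lambda\ge -C\delta^\gamma$ for all $\lambda\ge0$.

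\textbf{Step 4 (conclusion).} At $\lambda=0$, the bound for both $e$ and $-e$ gives $|u(x)-u(x^0)|\le C\delta^\gamma$ for every hyperplane through the origin. Since any two points on $\partial B_r$ are related by one reflection, this bounds $\osc_{\partial B_r}u$. Monotonicity along rays follows from $w_\lambda\ge -C\delta^\gamma$ with $\lambda=(r+\rho)/2$ and the points $re$, $\rho e$.

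The main obstacle is Step 2. The exterior tail of the shifted function is negative in part of $\Sigma_\lambda\setminus B_1$ and of size $\delta^\gamma$, and the Harnack constant must not depend on the shift. I would try to absorb the tail by placing the shift's negative part inside the kernel-difference term $\frac1{|x-y|^{n+2s}}-\frac1{|x-y^\lambda|^{n+2s}}$ and estimating it by the $L^1$ tail norm times $\text{dist}(x,T_\lambda)$.
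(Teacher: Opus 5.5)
Your plan is essentially the paper's proof: a shifted antisymmetric function, a shift-uniform weak Harnack inequality with polynomial loss $\eta^{-\beta}$ (Theorem~\ref{thm:propagation}, which the paper obtains from the known antisymmetric weak Harnack inequality in balls plus a Harnack chain inside the convex cap), positive mass from $c_*(1-|x|)^s\le u\le C_H(1-|x|)^s$ on a ball near $(1-2\eta)e$ whose reflection stays at distance $\gtrsim\lambda$ from $\partial B_1$, small-set maximum principles to push the plane, a $C^s$ bridge on $[0,\lambda_*]$, and reflections through the origin.

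Two corrections are needed. First, the ``main obstacle'' you describe is not there. Take $\chi=\mathrm{sign}(x\cdot e-\lambda)$, which is exactly the paper's antisymmetrization. Then your $v$ is exactly antisymmetric and nonnegative on all of $\{x\cdot e>\lambda\}$, since off $B_1$ one has $w_\lambda=u(x^\lambda)\ge0$. Moreover $(-\Delta)^s\chi>0$ there, so no tail has to be absorbed, and the shift $a$ enters only as an $O(a)$ right-hand side through $c_\lambda$. Second, precisely because of that $O(a)$ term, the continuation threshold should be linear in $\delta$, not $M\delta^\gamma$; the linear threshold is reproduced by the narrow-region and small-volume principles. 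With $M\delta^\gamma$ the argument does not close with the same $\gamma$. With a linear threshold the plane stops at $\lambda_*\lesssim\delta^{1/(s+\beta)}$, giving $\gamma=s/(s+\beta)$, where $\beta$ is the Harnack-chain exponent.
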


The estimate \eqref{eq:mainest} controls both closeness to radial symmetry and a finite-difference one-sided form of radial monotonicity. Zero order is a strengthening compared to the local theorem of Ciraolo, Cozzi, Perugini, and Pollastro \cite{CCPP24}. Their zero-order coefficient quantity of the same type as \eqref{eq:raddef} is sufficient for almost radial symmetry, while their pointwise radial-derivative estimate uses first-order information. 

For $x\ne0$, set
\[
\partial_r k(x)=\frac{x}{|x|}\cdot\nabla k(x),\qquad\nabla_T k(x)=\nabla k(x)-\frac{x}{|x|}\partial_r k(x),
\]
and define
\[
\mathcal D_1(k):=\|\nabla_T k\|_{L^\infty(B_1)}+\|(\partial_r k)_+\|_{L^\infty(B_1)}.
\]
The quantity $\mathcal D_1(k)$ measures the first-order radial symmetry and monotonicity of $k$ in $B_1$. In the special case where $k$ is $C^1$, we then have the following corollary.

\begin{corollary}\label{cor:firstorder}
Under the assumptions of Theorem~\ref{thm:main}, suppose, in addition, that $k\in C^1(B_1)$ and $\nabla k\in L^\infty(B_1)$. Then
\[
\mathcal D(u)\le C\,\mathcal D_1(k)^\gamma,
\]
with $C$ and $\gamma$ depending only on the data in Theorem~\ref{thm:main}.
\end{corollary}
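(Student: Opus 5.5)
The corollary follows from Theorem~\ref{thm:main} once we show that $\mathcal D(k)\le C_n\,\mathcal D_1(k)$ for some dimensional constant $C_n$. Then $\mathcal D(u)\le C\,\mathcal D(k)^\gamma\le C\,C_n^\gamma\,\mathcal D_1(k)^\gamma$, and the constant $C\,C_n^\gamma$ still depends only on the data of Theorem~\ref{thm:main}. So the plan is to bound the two terms in the definition \eqref{eq:raddef} of $\mathcal D(k)$ separately, by integrating $\nabla k$ along suitable curves inside $B_1$.

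\emph{Angular oscillation.} Fix $0<r<1$ and two points $x,y\in\partial B_r$. Join them by a great-circle arc $\sigma(t)$ on $\partial B_r$, parametrized by arclength; its length is at most $\pi r\le\pi$. Since $\sigma$ is tangent to $\partial B_r$, we have $\sigma'(t)\cdot\frac{\sigma(t)}{|\sigma(t)|}=0$, so $\frac{d}{dt}k(\sigma(t))=\nabla_T k(\sigma(t))\cdot\sigma'(t)$. Integrating gives
\[
|k(x)-k(y)|\le \pi\|\nabla_T k\|_{L^\infty(B_1)},
\]
and therefore $\sup_{0<r<1}\osc_{\partial B_r}k\le\pi\|\nabla_T k\|_{L^\infty}$.

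\emph{Radial increase.} Fix $e\in\Sn$ and $0\le\rho<r<1$. Along the ray we have $\frac{d}{dt}k(te)=\partial_r k(te)$ for $t>0$. Then
\[
k(re)-k(\rho e)=\int_\rho^r\partial_r k(te)\,dt\le\int_\rho^r(\partial_r k)_+\,dt\le\|(\partial_r k)_+\|_{L^\infty}.
\]
When $\rho=0$ we use the continuity of $k$ at the origin and let $\rho\downarrow0$; note that $\partial_r k$ is only defined for $x\neq0$, but the bound is uniform in $\rho$. Taking positive parts and suprema over $e,\rho,r$ bounds the second term of $\mathcal D(k)$ by $\|(\partial_r k)_+\|_{L^\infty}$.

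Adding the two estimates gives $\mathcal D(k)\le(\pi+1)\,\mathcal D_1(k)$, which proves the corollary. There is no real obstacle here. The only points to check are the endpoint $\rho=0$, which is handled by continuity, and the fact that both integration paths stay inside $B_1$, where $k\in C^1$: the arc lies on $\partial B_r$ and the ray segment has length less than $1$, so the $L^\infty$ bounds on $\nabla k$ apply along the whole path.
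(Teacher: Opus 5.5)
Your proposal is correct and is essentially the paper's proof: you bound $\mathcal D(k)$ by a constant multiple of $\mathcal D_1(k)$ by integrating $\nabla_T k$ along great-circle arcs of length at most $\pi r$ and $\partial_r k$ along radial segments, then apply Theorem~\ref{thm:main} and absorb the constant. Your constant $\pi+1$ instead of the paper's $\pi$ makes no difference, and the $\rho=0$ endpoint needs no limiting argument, since $t\mapsto k(te)$ is already $C^1$ on $[0,r]$.
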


Another direct consequence of Theorem \ref{thm:main} is that, when $k$ is radially symmetric and decreasing, we obtain the radial symmetry and monotonicity of $u$. See \cite[Theorem 1.1]{Zhang15} and \cite{FW14,WQY24}.

When $g$ changes sign, the one-sided radial defect of $k$ no longer controls the defect of $u$. The following version uses the two-sided quantity $\osc_{B_1}k$ and a soft decay assumption of $u$ near the boundary.

\begin{theorem}\label{thm:signchanging}
Let $n\ge2$, $0<s<1$, $k\in C(B_1;[0,\infty))\cap L^\infty(B_1), g\in C^{0,1}_{\mathrm{loc}}([0,\infty);\R)$, and $F\in C([0,1];[0,\infty))$ satisfy
$F(0)=0$ and $ F(t)>0$ for all $t>0$.
Assume that $u$ is a classical solution of \eqref{eq:main} and for all $x\in B_1$,
\begin{equation}\label{eq:Fbound}
F(1-|x|)\le u(x)\le C_0.
\end{equation}
Then there exists a modulus of continuity $\omega_F$, depending only on $n,\ s,\ C_0,\ F,\ \|g\|_{C^{0,1}([0,C_0])},$\quad$\ \|k\|_{L^\infty(B_1)}$,
such that
\begin{equation}\label{eq:modulus}
\mathcal D(u)\le\omega_F(\osc_{B_1}k),\qquad \omega_F(t)\to0\quad\text{as }t\downarrow0.
\end{equation}
\end{theorem}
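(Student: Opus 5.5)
The plan is a quantitative moving-plane argument in each direction $e\in\Sn$, with the sign-changing nonlinearity handled by working only in a region where the reflected problem still has a favorable sign structure. Fix $e$ and for $\lambda\in(0,1)$ set $T_\lambda=\{x\cdot e=\lambda\}$, $\Sigma_\lambda=\{x\cdot e>\lambda\}\cap B_1$, $x^\lambda$ the reflection of $x$ across $T_\lambda$, and $w_\lambda(x)=u(x^\lambda)-u(x)$. This $w_\lambda$ is antisymmetric with respect to $T_\lambda$ and satisfies in $\Sigma_\lambda$
\[
(-\Delta)^s w_\lambda = c_\lambda(x) w_\lambda + \bigl(k(x^\lambda)-k(x)\bigr) g(u(x)),\qquad c_\lambda=k(x^\lambda)\frac{g(u(x^\lambda))-g(u(x))}{u(x^\lambda)-u(x)}.
\]
Here $|c_\lambda|\le \|k\|_\infty\|g\|_{C^{0,1}([0,C_0])}$. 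Since $g$ changes sign, the error term is only bounded by $\varepsilon:=\|g\|_{L^\infty[0,C_0]}\osc_{B_1}k$; it has no sign. This is why the two-sided quantity $\osc k$ appears.

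\textbf{Step 1 (start of the planes and near-symmetric sweep).} For $\lambda$ close to $1$, $\Sigma_\lambda$ is thin, and I would apply the narrow-region antisymmetric maximum principle to $(w_\lambda)_-$. This gives $w_\lambda\ge -C\varepsilon$ in $\Sigma_\lambda$, via an explicit barrier for the bounded right-hand side. Then I would decrease $\lambda$ and aim to show $w_\lambda\ge -C\varepsilon^{\gamma'}$ for all $\lambda\in[\delta,1)$, with $\delta=\delta(\varepsilon)\to0$.

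\textbf{Step 2 (the continuation step, the main obstacle).} At a critical $\lambda$, away from $T_\lambda$ and $\partial B_1$ one needs $w_\lambda$ quantitatively positive, to push the plane further. This is where \eqref{eq:Fbound} enters. Take $x\in\Sigma_\lambda$ at distance $\ge\eta$ from $\partial B_1$ and from $T_\lambda$. Then $x^\lambda$ lies deeper inside the ball, and I would use the shift-uniform weak Harnack inequality for antisymmetric supersolutions (the paper's key ingredient) applied to $w_\lambda+C\varepsilon$. This yields
\[
\inf_{K_\eta} w_\lambda \ge c(\eta)\int_{\Sigma_\lambda} \frac{(w_\lambda)_+\,\text{dist-weight}}{\cdots} - C\varepsilon .
\]
The lower bound $F$ makes $w_\lambda\ge F(\cdot)$ on the region where $x^\lambda\in B_1$ but $x\notin B_1$, in the exterior part of the antisymmetric tail. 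This gives a positive mass $m(\eta,\lambda)$ depending only on $F$, uniformly in $\lambda\ge\delta$. The region within $\eta$ of $\partial B_1\cup T_\lambda$ is then narrow, and the narrow-region principle closes the argument there. Choosing $\eta=\eta(\varepsilon)$ so that $c(\eta)m\gg\varepsilon$ yields a modulus through $F^{-1}$-type dependence, which explains why only a modulus $\omega_F$, not a power, is obtained.

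\textbf{Step 3 (conclusion).} Letting the directions $e$ and $-e$ both reach $\lambda=\delta$, one gets $|u(x)-u(x^\lambda)|\le\omega(\varepsilon)$ for reflections across all planes with $|\lambda|\ge\delta$. Interior H\"older regularity of $u$ (bounded right-hand side) then handles $\lambda\in(-\delta,\delta)$, giving approximate symmetry across every hyperplane through the origin. Composing two such reflections rotates points on $\partial B_r$, which controls $\osc_{\partial B_r}u$. The one-sided inequality $w_\lambda\ge-\omega$ for $\lambda>\delta$ gives $u(re)-u(\rho e)\le\omega$ for $\rho<r$, by reflecting $re$ across $T_{(r+\rho)/2}$ when $(r+\rho)/2\ge\delta$, and by using Hölder continuity otherwise. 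Summing gives \eqref{eq:modulus}.
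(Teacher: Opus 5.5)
Your architecture matches the paper's: a narrow-region start, then continuity in $\lambda$ with an $O(\varepsilon)$ error. At the critical plane, positivity is propagated by a weak Harnack inequality for the shifted function $w_\lambda+A\varepsilon$, the leftover region is small, and the argument ends with reflection through $T_0$ and the $(r+\rho)/2$ trick. But the step you single out as the main obstacle does not go through as written.

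The positive mass you use lives in the crescent $P_\lambda:=\{y\in H_\lambda^+\setminus B_1:\ y^\lambda\in B_1\}$, which lies outside $B_1$ and hence outside $\Sigma_\lambda$. The paper's shift-uniform weak Harnack inequality, Theorem~\ref{thm:propagation}, is tail-free. It compares averages of $v$ on balls inside $\Sigma_{\lambda,\delta}$ with $\inf_{\Sigma_{\lambda,\delta}}v$, and never sees the values of $v$ outside $\Sigma_\lambda$. Your displayed inequality, with $\int_{\Sigma_\lambda}$ on the right, has the same feature. So the fact that $w_\lambda\ge F(1-|x^\lambda|)$ on $P_\lambda$ gives, through the tool you cite, no lower bound on $K_\eta$, and the continuation does not close.

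The paper avoids tails altogether by placing the mass inside $\Sigma_{\lambda_*}$. The global $C^s$ estimate (Proposition~\ref{prop:boundaryreg}) gives $u(x)\le C_H(1-|x|)^s$. So on $B_{\eta/2}(z_\eta)$, with $z_\eta=(1-2\eta)e$, $u$ is small while $u(x^{\lambda_*})\ge\underline F(\lambda_*)$. This forces $\eta=\eta(\lambda_*)$ as in \eqref{eq:etaF}, the chain costs $\eta^{\beta}$, and one gets $\Psi(\lambda_*)\le C\varepsilon$.

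Your exterior idea can be rescued, but only with an extra, tail-sensitive lemma. Take $e=e_1$. Antisymmetrize $v=w_\lambda+A\varepsilon$ to $\widetilde v$ as in Step~I of the proof of Theorem~\ref{thm:propagation}. Let $\chi$ equal $\widetilde v$ on $P_\lambda$ and on its reflection, and $0$ elsewhere. Then for $x\in\Sigma_\lambda$,
\[
(-\Delta)^s\chi(x)=-c_{n,s}\int_{P_\lambda}\widetilde v(y)\Bigl(|x-y|^{-n-2s}-|x-y^\lambda|^{-n-2s}\Bigr)\,dy=:-\mathcal T(x)<0.
\]
Hence $\widetilde v-\chi$ is antisymmetric, nonnegative in $H_\lambda^+$, and a supersolution with the extra source $\mathcal T$. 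Comparing it with an antisymmetrized torsion function of $B_{\eta/2}(z)$, for $z\in\Sigma_{\lambda,\eta}$, gives $\inf_{B_{\eta/4}(z)}v\ge c\,\eta^{2s}\bigl(\inf_{B_{\eta/2}(z)}\mathcal T-C\varepsilon\bigr)$.

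Two facts then give the lower bound. First, $|x-y|^{-n-2s}-|x-y^\lambda|^{-n-2s}\ge c\,(x_1-\lambda)(y_1-\lambda)|x-y^\lambda|^{-n-2s-2}$. Second, $P_\lambda$ has width of order $\lambda$, with $1-|y^\lambda|\sim\lambda$ on a fixed portion of it. Together they give $\inf_{\Sigma_{\lambda,\eta}}w_\lambda\ge c(\eta)\,\lambda\,\underline F(c\lambda)-C\varepsilon$ with $\eta$ fixed.

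This route buys something over the paper. It needs no chain of balls and no boundary upper bound at this step. It also keeps $\eta$ fixed, so the leftover region automatically has small volume. The modulus then comes from $\lambda_*\underline F(c\lambda_*)\lesssim\varepsilon$. Your ``choose $\eta=\eta(\varepsilon)$'' should be replaced by reading a bound on $\lambda_*$ off the failure of continuation.

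Two smaller corrections.
\begin{enumerate}[label=\textup{(\roman*)}]
\item In Step~3 you do not get $|u(x)-u(x^\lambda)|\le\omega$ for $|\lambda|\ge\delta$; this is false already for radial decreasing $u$. You get only the one-sided bound. Two-sided symmetry holds only across $T_0$, after combining the bounds at $\lambda=\delta$ from $e$ and $-e$. That passage needs the global $C^s(\R^n)$ estimate, not interior H\"older regularity, since the compared points may lie near or outside $\partial B_1$.
\item The part of the leftover region near $\partial B_1$ is not a strip at $T_\lambda$. There you should use the small-volume principle, Proposition~\ref{prop:maxprinciples}(ii), rather than the narrow-region one.
\end{enumerate}
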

\begin{remark}
    In particular, under the stronger assumption of Theorem \ref{thm:main}, Lemma \ref{lem:lower} allows us to take $F(t)=Ct^{s}$. Similar argument of Theorem \ref{thm:signchanging} yields $w_{F}(t)=Ct^{\gamma},$ where $\gamma\in(0,1)$.
\end{remark}
Also, there are similar results hold for Theorem \ref{thm:signchanging}.
\begin{corollary}
Under the assumptions of Theorem~\ref{thm:signchanging}, suppose, in addition, that $k\in C^1(B_1)$ and $\nabla k\in L^\infty(B_1)$. Then
\[
\mathcal D(u)\le\omega_F(\|\nabla k\|_{L^\infty(B_1)}),\qquad \omega_F(t)\to0\quad\text{as }t\downarrow0,
\]
with $C$ and $w_{F}$ depending only on the data in Theorem~\ref{thm:signchanging}.
\end{corollary}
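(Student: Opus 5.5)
The plan is to deduce the statement from Theorem~\ref{thm:signchanging} by showing that $\osc_{B_1}k$ is controlled by $\|\nabla k\|_{L^\infty(B_1)}$. Then we compose with the modulus $\omega_F$, which we may assume nondecreasing.

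\emph{Step 1: bound the oscillation.} Since $B_1$ is convex, for any $x,y\in B_1$ the segment joining them lies in $B_1$. The mean value theorem along the segment, using $k\in C^1(B_1)$, gives
\[
|k(x)-k(y)|\le \|\nabla k\|_{L^\infty(B_1)}|x-y|\le 2\|\nabla k\|_{L^\infty(B_1)}.
\]
Hence $\osc_{B_1}k\le 2\|\nabla k\|_{L^\infty(B_1)}$.

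\emph{Step 2: make $\omega_F$ monotone.} If $\omega_F$ from Theorem~\ref{thm:signchanging} is not already nondecreasing, replace it by
\[
\tilde\omega_F(t):=\sup_{0<\tau\le t}\omega_F(\tau).
\]
This $\tilde\omega_F$ is nondecreasing and dominates $\omega_F$. It still tends to $0$ as $t\downarrow0$, because $\omega_F(\tau)\to0$ as $\tau\downarrow0$, so the supremum over $(0,t]$ is small for small $t$. It depends on the same data.

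\emph{Step 3: compose.} Combining the two steps,
\[
\mathcal D(u)\le\tilde\omega_F(\osc_{B_1}k)\le\tilde\omega_F\bigl(2\|\nabla k\|_{L^\infty(B_1)}\bigr).
\]
Define the new modulus $t\mapsto\tilde\omega_F(2t)$. It tends to $0$ as $t\downarrow0$ and depends only on the data of Theorem~\ref{thm:signchanging}, which gives the corollary. The case $\nabla k\equiv0$ is included: then $k$ is constant, and the bound reads $\mathcal D(u)\le\tilde\omega_F(0)$, where we set $\tilde\omega_F(0):=0$. This is consistent, because applying Theorem~\ref{thm:signchanging} with $\osc_{B_1} k=0$ and letting the modulus tend to $0$ shows $\mathcal D(u)=0$.

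There is no real obstacle here; the only point needing care is the monotonicity of $\omega_F$ in Step~2. One could also take $\osc_{B_1} k$ over $\overline{B_1}$ via continuous extension, but this is not needed, since the oscillation is taken over the open ball. If the sharper power form from the Remark is desired under the hypotheses of Theorem~\ref{thm:main}, the same composition gives $\mathcal D(u)\le C\,(2\|\nabla k\|_{L^\infty(B_1)})^\gamma$.
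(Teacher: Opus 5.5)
Your proposal is correct and follows the same route the paper uses for Corollary~\ref{cor:firstorder}: bound the zero-order quantity by the first-order one (here $\osc_{B_1}k\le 2\|\nabla k\|_{L^\infty(B_1)}$ via the mean value theorem on the convex ball), then compose with the modulus from Theorem~\ref{thm:signchanging}. Your monotone-envelope step is harmless extra care, since the paper's $\omega_F(t)=C(t+r_F(t)^s)$ is already nondecreasing because $r_F$ is.
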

\begin{corollary}
 Under the assumptions of Theorem~\ref{thm:signchanging}, suppose, in addition, $k$ is constant in $B_{1}$. Then $u$ is symmetric and nonincreasing with respect to $|x|$ at origin.
\end{corollary}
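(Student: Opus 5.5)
The plan is to apply Theorem~\ref{thm:signchanging} directly with $\osc_{B_1}k=0$ and to read off the conclusion from the characterization of $\mathcal D$. All the hypotheses of Theorem~\ref{thm:signchanging} are assumed: $n\ge2$, $0<s<1$, $k\in C(B_1;[0,\infty))\cap L^\infty(B_1)$, $g\in C^{0,1}_{\rm loc}([0,\infty);\R)$, and $u$ is a classical solution of \eqref{eq:main} satisfying \eqref{eq:Fbound} for an admissible $F$. The additional assumption is that $k\equiv k_0$ is constant in $B_1$, so
\[
\osc_{B_1}k=\sup_{B_1}k-\inf_{B_1}k=0 .
\]

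The key step is to evaluate \eqref{eq:modulus} at $t=0$. Theorem~\ref{thm:signchanging} gives $\mathcal D(u)\le\omega_F(\osc_{B_1}k)=\omega_F(0)$. The only point needing care is the value of $\omega_F$ at $0$, since the theorem states only $\omega_F(t)\to0$ as $t\downarrow0$. I would settle this in one of two ways.

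The first way is to note that a modulus of continuity is by convention nondecreasing with $\omega_F(0)=\lim_{t\downarrow0}\omega_F(t)=0$. The second way avoids any convention: since $k$ is constant, $\osc_{B_1}k\le t$ holds for every $t>0$. If $\omega_F$ is taken nondecreasing, or if one simply observes that the constructed bound in the proof of Theorem~\ref{thm:signchanging} only uses the hypothesis $\osc_{B_1}k\le t$, then $\mathcal D(u)\le\omega_F(t)$ for every $t>0$. Letting $t\downarrow0$ gives $\mathcal D(u)\le0$. Either way, $\mathcal D(u)=0$, because $\mathcal D$ is nonnegative by definition \eqref{eq:raddef}.

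Finally, by the equivalence stated after \eqref{eq:raddef}, $\mathcal D(u)=0$ means exactly that $u$ is radial and nonincreasing in $|x|$, with respect to the origin. Concretely, the vanishing of the first term in \eqref{eq:raddef} gives $\osc_{\partial B_r}u=0$ for every $r$, so $u$ is constant on each sphere. The vanishing of the second term gives $u(re)\le u(\rho e)$ for all $0\le\rho<r<1$ and every direction $e$, which is monotonicity along rays.

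The only potential obstacle is the interpretation of $\omega_F(0)$ described above; the rest is immediate from Theorem~\ref{thm:signchanging}. Equivalently, one could rerun the moving-plane argument with zero defect and conclude that no shift is needed, but invoking the theorem is the cleaner route.
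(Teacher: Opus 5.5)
Your proposal is correct and matches the paper, which states this corollary as an immediate consequence of Theorem~\ref{thm:signchanging} at $\osc_{B_1}k=0$. There $\omega_F(0)=0$, since $r_F(0)=0$, and in fact the proof of that theorem disposes of the case $\varepsilon=0$ directly via the qualitative theorem \cite[Theorem~1.1]{JW16}. Your second route is a valid variant: running the quantitative argument with an arbitrary small $t>0$ in place of $\varepsilon$ gives $\mathcal D(u)\le C(t+r_F(t)^s)$, and letting $t\downarrow0$ yields the same conclusion without invoking that citation.
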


%If $\mathcal D(k)=0$, then $k(x)=k_0(|x|)$ with $k_0$ nonincreasing. Extend $g$ to $\R$ by $\widetilde g(t)=g(t_+)$. The map $F(x,t)=k_0(|x|)\widetilde g(t)$ is locally Lipschitz in $t$ on bounded intervals and has the reflection monotonicity required in the qualitative nonlocal theory. Because our solution is nonnegative and belongs to $X_0^s(B_1)$, the qualitative theorem of Jarohs and Weth \cite[Theorem~1.1 and Corollary~1.3(ii)]{JW16} implies that $u$ is radial and nonincreasing in $|x|$. Related fractional symmetry results include \cite{BLW05,FW14,Dou16,CLL17,BMS18} and the fractional overdetermined reflection problem in \cite{FJ15}. Therefore, exact symmetry at zero defect is part of the qualitative theory. In contrast, the new content of Theorem~\ref{thm:main} is the uniform quantitative estimate when the defect is nonzero.

\subsection{Quantitative moving planes}
A quantitative moving-plane theory was initiated in overdetermined settings by Aftalion,  Busca and Reichel \cite{ABR99}. Later, It was developed in several directions. For instance, there is the Hölder stability result for Serrin's problem \cite{CMV16} and the sharp quantitative Alexandrov theorem of Ciraolo and Vezzoni \cite{CV18}. Rosset considered an approximate Gidas–Ni–Nirenberg theorem for domains close to a ball \cite{Rosset94}, while \cite{CCPP24} treats a fixed ball with a spatially perturbed semilinear coefficient. Further developments include  semilinear problems \cite{CCG25,CCPP24}, $p$-Laplace equations \cite{CL25,DGSPV25,Gatti25} and mixed local–nonlocal operator \cite{Biswas26}. We point out that the local part plays the main role in the argument of \cite{Biswas26}, while the nonlocal term does not cause any difficulty.

The obstruction for \eqref{eq:main} appears at the nearly critical plane. Indeed, if we generalize the weak Harnack inequalities \cite[Lemma~5.1]{Biswas26} and \cite[Lemma~2.2]{CCPP24} to the pure nonlocal setting, tail terms cannot be avoided when applying them to a function that may not be nonnegative in $\mathbb R^n$. Note that a quantitative comparison gives the reflected difference only up to an additive error of size $a$, i.e.,
\begin{equation}\label{eq:introshift}
v(x^\lambda)=2a-v(x),\qquad a\ge0,
\end{equation}
rather than genuine antisymmetry. Hence, we adjust the classical weak Harnack inequalities for antisymmetric functions \cite{DCKP14,DKTV25,DPTV24,DTV23} to more general function \eqref{eq:introshift},
%To overcome this difficulty, we introduce a tail-free weak Harnack inequality for shifted antisymmetric supersolutions
which allows us to remove the exterior-tail obstruction in the quantitative moving-plane argument. 

In addition, the narrow-region and small-volume maximum principles are important tools in the moving-plane method \cite{CLL17,JW16}. We establish quantitative versions of these principles for antisymmetric functions. Moreover, using the Green function of the ball \cite{Bucur16}, we establish a bounded decay scale $(1-|x|)^s$ for the solution with positive nonlinear term. 
%The critical-plane argument combines a boundary factor $\lambda_*^s$ with a propagation factor $\lambda_*^\beta$.

\subsection{Generalization}
The form of product $k(x)g(u)$ is chosen because it makes the coefficient defect and the comparison with \cite{CCPP24} transparent, but this choice is not a structural restriction of the proof. In fact, Section~\ref{sec:extensions} proves the corresponding theorem for the nonseparable equation $(-\Delta)^s u=f(x,u)$ and a signed version under an explicit boundary lower bound. 

Corresponding to \eqref{eq:raddef} and $\osc_{B_1}k$, for a bounded function $f:B_1\times[0,C_0]\to\mathbb R$, we define the spatial defect uniformly in the solution variable by
\begin{equation}\label{eq:fdef}
\mathcal D_x(f;C_0):=\sup_{0\le t\le C_0}\mathcal D(f(\cdot,t)),
\end{equation}
and the spatial oscillation uniformly in the solution variable by
\begin{equation}\label{eq:fosc}
\operatorname{osc}_x(f;C_0) :=\sup_{0\le t\le C_0}\osc_{B_1}f(\cdot,t).
\end{equation}
Then results analogous to Theorem~\ref{thm:main} and \ref{thm:signchanging} follow by using $\mathcal D_x(f;C_0)$ and $\operatorname{osc}_x(f;C_0)$. For more details, we refer to Theorem~\ref{thm:nonseparable} and Corollary~\ref{cor:signed-nonseparable}.

The paper is organized as follows. Section~2 proves the tail-free weak Harnack inequality for the shifted antisymmetry function, shows the quantitative antisymmetric maximum principles, and provide the boundary estimate and Green representation needed later. Section~3 proves Theorems~\ref{thm:main} and~\ref{thm:signchanging}. Section~\ref{sec:extensions} first proves two direct consequences, then treats nonseparable nonlinearities and discusses the power scale.

\section{Preliminaries}

\subsection{Tail-free weak Harnack inequality}

For $\lambda\in\R$, write
\[
H_\lambda^+:=\{x_1>\lambda\},\qquad H_\lambda^-:=\{x_1<\lambda\},\qquad T_\lambda:=\{x_1=\lambda\},\qquad x^\lambda:=(2\lambda-x_1,x_2,\dots,x_n).
\]
For $\lambda\in[0,1)$ set
\begin{equation}\label{eq:Sigma}
\Sigma_\lambda:=B_1\cap H_\lambda^+,
\end{equation}
and for $\delta>0$,
\begin{equation}\label{eq:SigmaDelta}
\Sigma_{\lambda,\delta}:=\{x\in\Sigma_\lambda:\dist(x,\partial\Sigma_\lambda)>\delta\}.
\end{equation}
Moreover, for a measurable set $E$ with $0<|E|<\infty$, we define $$\fint_Ev:=|E|^{-1}\int_Ev.$$

The following weak Harnack theorem plays an important role in proving Theorems \ref{thm:main} and \ref{thm:signchanging}.
\begin{theorem}\label{thm:propagation}
Fix $0<s<1$ and $0\le\lambda_0<1$. For
$0\le\lambda\le\lambda_0,0<\delta\le\frac{1-\lambda_0}{6}$ and $a\ge0,$ let $c,h:\Sigma_\lambda\to\R$ be bounded functions with $M=\sup_{\Sigma_\lambda}c_+$. Suppose $v\in C^{1,1}_{\mathrm{loc}}(\Sigma_\lambda)\cap\mathcal L^{2s}(\R^n)$, satisfying
\begin{equation}\label{eq:super}
(-\Delta)^sv+c(x)v\ge h(x) \qquad\text{in }\Sigma_\lambda,
\end{equation}
and for all $x\in H_\lambda^+$, we have
\begin{equation}\label{eq:shiftanti}
v\ge0, \qquad v(x^\lambda)=2a-v(x).
\end{equation}
 Then there exist $C>0$ and $\beta>0$, depending only on $n,s,\lambda_0,M$, such that
\begin{equation}\label{eq:propestimate}
\sup_{z\in\Sigma_{\lambda,\delta}}\fint_{B_{\delta/2}(z)}v\,dx\le C\delta^{-\beta}\left(\inf_{\Sigma_{\lambda,\delta}}v+\sup_{\Sigma_\lambda}h_-\right).
\end{equation}
\end{theorem}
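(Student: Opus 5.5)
The plan is to prove a local weak Harnack inequality at scale $\delta$ without a tail term, and then to propagate it across $\Sigma_{\lambda,\delta}$ by a Harnack chain. The chain costs $\delta^{-\beta}$.

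\emph{Step 1: the shifted antisymmetry makes the tail harmless.} Fix $z\in\Sigma_{\lambda,\delta}$ and a ball $B_{2r}(z)\subset\Sigma_\lambda$ with $r\sim\delta$. For $x\in B_r(z)$, split the nonlocal integral of $v$ over $H_\lambda^+\setminus B_{2r}(z)$ and $H_\lambda^-$. On $H_\lambda^-$, change variables $y\mapsto y^\lambda$ and use $v(y^\lambda)=2a-v(y)$. This gives
\[
\int_{H_\lambda^+}\Bigl(\frac{v(x)-v(y)}{|x-y|^{n+2s}}+\frac{v(x)-2a+v(y)}{|x-y^\lambda|^{n+2s}}\Bigr)dy.
\]
Since $|x-y^\lambda|\ge|x-y|$ for $x,y\in H_\lambda^+$, the combined kernel $K(x,y)=|x-y|^{-n-2s}-|x-y^\lambda|^{-n-2s}$ is nonnegative. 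It multiplies $-v(y)$, and $v\ge0$ on $H_\lambda^+$. The remaining terms are $v(x)\bigl(\int|x-y|^{-n-2s}+|x-y^\lambda|^{-n-2s}\bigr)$ and $-2a\int_{H_\lambda^+}|x-y^\lambda|^{-n-2s}dy$.

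The $a$-term is a problem, since it has the wrong sign for a supersolution. The way around it is to work with the function $w:=v-a\,\chi$, or better, to observe that $v\ge0$ on $H_\lambda^+$ together with the relation forces $v\le 2a$ on $H_\lambda^-$. In fact I would argue directly: where $y\in H_\lambda^+$ far from $B_{2r}(z)$, $v(y)\ge 0$ only helps. So the standard De Giorgi / Krylov--Safonov weak Harnack proof, e.g.\ in the form of \cite{DCKP14} applied to $v_+$ localized to $B_{2r}(z)$, has its negative tail $\mathrm{Tail}(v_-)$ replaced by contributions from $H_\lambda^-$ only.

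Those contributions are exactly $(v-2a)_+$ reflected, i.e.\ controlled by $\int_{H_\lambda^+}(v(y)-2a)\,|x-y^\lambda|^{-n-2s}dy$. Since $|x-y^\lambda|\ge |x_1-\lambda|\gtrsim\delta$ and this quantity is paired against the positive term $\int v(y)K$, I would absorb it: it suffices that $|x-y^\lambda|^{-n-2s}\le|x-y|^{-n-2s}$. The net effect is
\[
(-\Delta)^s v(x)\le \text{local part}+C\delta^{-2s}v(x)+\text{(nonpositive far terms)},
\]
so the term $2a\int|x-y^\lambda|^{-n-2s}$ can only be added to $h$ on the good side, or dropped. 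This yields a local inequality $(-\Delta)^s_{B_{2r}}\tilde v+(M+C\delta^{-2s})\tilde v\ge -h_-$ for the restriction $\tilde v=v\chi_{B_{2r}(z)}\ge0$ in $B_r(z)$.

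\emph{Step 2: local weak Harnack at scale $r$.} For nonnegative global supersolutions there is the standard estimate
\[
\fint_{B_{r/2}}\tilde v\le C\bigl(\inf_{B_{r/2}}\tilde v+r^{2s}\|h_-\|_\infty\bigr).
\]
After scaling, the zero-order coefficient $r^{2s}(M+C\delta^{-2s})$ is bounded. This follows from the nonnegative-supersolution theory, for instance via the ABP / measure estimate of Caffarelli--Silvestre, or \cite{DCKP14} with zero tail. The constant depends only on $n,s,M$.

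\emph{Step 3: chaining.} Connect any $z\in\Sigma_{\lambda,\delta}$ to a point realizing the infimum using $N\lesssim\delta^{-1}$ overlapping balls of radius comparable to $\delta$ inside $\Sigma_{\lambda,\delta/2}$. Because $\delta\le(1-\lambda_0)/6$, $\Sigma_{\lambda,\delta}$ is connected with uniformly controlled geometry. Rather than a linear chain, which gives $C^N$, I would use a dyadic chain. Balls grow geometrically away from $\partial\Sigma_\lambda$, so the chain has $O(|\log\delta|)$ steps, and $C^{O(|\log\delta|)}=\delta^{-\beta}$. Each step uses the mean-to-infimum estimate together with $\inf_{B}v\le\fint_{B'}v$ on overlaps.

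\emph{Main obstacle.} The hardest part is Step 1: making rigorous that the shift $a$ and the reflected region produce no tail. Concretely, one must show that the term involving $2a$ has a favorable sign, or can be absorbed using $v\ge0$ on $H_\lambda^+$ and $v\le2a$ on $H_\lambda^-$, uniformly in $a$. If direct absorption fails, my fallback is to apply the weak Harnack to $v$ viewed as a supersolution with nonnegative data in all of $H_\lambda^+$. There the only negativity lies in $H_\lambda^-$, at distance at least $\delta$ from $B_{2r}(z)$, and it is bounded by $2a$. I would then bound $a\lesssim\inf v$ using the fact that $v(x)+v(x^\lambda)=2a$ and $v\ge0$.
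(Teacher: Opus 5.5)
Your architecture matches the paper's: a tail-free weak Harnack inequality in balls $B_R(z)\subset\Sigma_\lambda$, then a chain of balls whose radii grow geometrically, so that $O(\log(1/\delta))$ links cost $\delta^{-\beta}$. Your Step~3 is fine. The paper makes it concrete using convexity, $B_{\rho_j}(x_j)=(1-t_j)B_\delta(z)+t_jB_d(z_*)\subset\Sigma_\lambda$, and routes both $z$ and the near-minimizer through the central ball $B_{d/2}(z_*)$. \textbf{The gap is in Step~1.} That step is the actual content of the theorem, and you leave it open as the ``main obstacle''.

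\emph{The shift has the favorable sign, not the wrong one.} Put $\widetilde v=v$ on $H_\lambda^+$ and $\widetilde v=v-2a$ on $H_\lambda^-$. Then $\widetilde v(x^\lambda)=-\widetilde v(x)$ and $\widetilde v=v\ge0$ on $H_\lambda^+$. For $x\in\Sigma_\lambda$,
\[
(-\Delta)^s\widetilde v(x)=(-\Delta)^sv(x)+2ac_{n,s}\int_{H_\lambda^-}\frac{dy}{|x-y|^{n+2s}}\ge(-\Delta)^sv(x).
\]
Hence $(-\Delta)^s\widetilde v+M\widetilde v\ge-\sup_{\Sigma_\lambda}h_-$ in $\Sigma_\lambda$, and the case $a>0$ reduces to $a=0$ in one line. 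Your fallback $a\lesssim\inf v$ is unnecessary. It also does not follow from $v(x)+v(x^\lambda)=2a$ and $v\ge0$, which only give $v\le 2a$ on $H_\lambda^-$.

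\emph{Your localization is not correct as written.} Suppose you truncate to $B_{2r}(z)$ and keep the kernel $|x-y|^{-n-2s}$ in the local part. Then the reflection of $B_{2r}(z)$ itself leaves the term $c_{n,s}\int_{B_{2r}(z)}v(y)|x-y^\lambda|^{-n-2s}\,dy$ in your upper bound for $(-\Delta)^sv(x)$. This term is a weighted average of $v$ over $B_{2r}(z)$. It is not $O(\delta^{-2s})\,v(x)$, and it is not nonpositive. So $(-\Delta)^s_{B_{2r}}\tilde v+(M+C\delta^{-2s})\tilde v\ge-h_-$ does not follow. What you actually get has an extra negative right-hand side proportional to an average of $v$, which is exactly a tail-type term.

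To repair your route, keep $K$ in the local operator:
\[
(-\Delta)^sv(x)=c_{n,s}\,\PV\!\int_{H_\lambda^+}(v(x)-v(y))K(x,y)\,dy+2c_{n,s}(v(x)-a)\int_{H_\lambda^-}\frac{dy}{|x-y|^{n+2s}}.
\]
Then proceed as follows.
\begin{enumerate}[label=(\alph*)]
\item Drop the $a$-term.
\item Discard the part of the integral outside $B_{2r}(z)$ using $v\ge0$ and $K\ge0$, at a cost of $Cr^{-2s}v(x)$.
\item Take $r\le\dist(z,T_\lambda)/8$, so that $K\ge(1-3^{-n-2s})|x-y|^{-n-2s}$ on $B_{2r}(z)\times B_{2r}(z)$.
\item Apply a weak Harnack inequality for nonnegative supersolutions of operators whose kernels are comparable to the fractional one, including the passage from pointwise to weak supersolutions.
\end{enumerate}
This amounts to reproving the interior antisymmetric weak Harnack inequality by hand. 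The paper instead applies \cite[Proposition~3.2]{DTV23} to $\widetilde v$ and obtains \eqref{eq:localL1} with no tail directly.
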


\begin{proof}
\emph{Step I: Weak Harnack inequality in balls.}

Set $\Omega:=\Sigma_\lambda$ and $E_h:=\sup_\Omega h_-$. Define
\[
\widetilde v(x)=
\begin{cases}
v(x),&x\in H_\lambda^+,\\
v(x)-2a,&x\in H_\lambda^-,\\
0,&x\in T_\lambda.
\end{cases}
\]
Then $\widetilde v(x^\lambda)=-\widetilde v(x)$ and $\widetilde v\ge0$ in $H_\lambda^+$.
For every $x\in\Omega$,
\[
(-\Delta)^s\widetilde v(x)=(-\Delta)^sv(x)+2ac_{n,s}\int_{H_\lambda^-}\frac{dy}{|x-y|^{n+2s}}\ge(-\Delta)^sv(x).
\]
Since $v\ge0$ and $c\le M$ in $\Omega$, it follows that
\[
(-\Delta)^s\widetilde v+M\widetilde v\ge-E_h\quad\text{in }\Omega.
\]
By \cite[Proposition~3.2]{DTV23}, after scaling, for all $B_R(z)\subset\Omega$ with $0<R\leq1$, there holds
\begin{equation}\label{eq:localL1}
\fint_{B_{R/2}(z)}v\,dx\le C_L\left(\inf_{B_{R/2}(z)}v+R^{2s}E_h\right),
\end{equation}
where $C_L>0$ depends only on $n,s,M$.

\emph{Step II: Iteration argument.}

For $e_{1}=(1,0,\cdots,0)\in\mathbb{R}^{n}$, define
\[
z_*:=\frac{1+\lambda}{2}e_1,\qquad d:=\frac{1-\lambda}{2},\qquad Q_*:=B_{d/2}(z_*).
\]
Then we have $$B_d(z_*)\subset\Omega,\qquad d\ge\frac{1-\lambda_0}{2}.$$ 

Next, for a fixed point $z\in\Sigma_{\lambda,\delta}\subset B_{1}$, set
\[
\vartheta:=\frac{1-\lambda_0}{48},\qquad \rho_j:=(1+\vartheta)^j\delta,\qquad 
\]
and $N\in\mathbb Z$ such that $\rho_N\le d$ and $\rho_{N+1}> d$. 

For $j=0,\dots,N$, put
\[
t_j:=\frac{\rho_j-\delta}{d-\delta},\qquad x_j:=(1-t_j)z+t_jz_*, \qquad Q_j:=B_{\rho_j/2}(x_j).
\]
Since $B_\delta(z)$ and $B_d(z_*)$ are contained in the convex set $\Omega$, we have
\begin{equation}\label{eq:convexballs}
B_{\rho_j}(x_j)=(1-t_j)B_\delta(z)+t_jB_d(z_*)\subset\Omega.
\end{equation}
Moreover,
\[
d-\delta\ge\frac{1-\lambda_0}{3},\qquad|z-z_*|\le2.
\]
Thus, for $j<N$,
\[
|x_{j+1}-x_j|=\frac{\vartheta\rho_j}{d-\delta}|z-z_*|\le\frac{\rho_j}{8},
\]
and 
\[
|z_*-x_N|=\frac{d-\rho_N}{d-\delta}|z-z_*|\le\frac{\rho_N}{8}.
\]

Write $r_j:=\rho_j/2$. For $j<N$, $|x_{j+1}-x_j|\le r_j/4$ and $r_{j+1}=(1+\vartheta)r_j$, so
\[
B_{3r_j/4}(x_j)\subset Q_j\cap Q_{j+1}.
\]
For the final link, $|z_*-x_N|\le r_N/4$ and $r_N\le d/2$, so $B_{3r_N/4}(x_N)\subset Q_N\cap Q_*$. Therefore, with
\begin{equation}\label{eq:kappa}
\kappa:=\left(\frac{3}{4(1+\vartheta)}\right)^n,
\end{equation}
every consecutive pair $Q,Q'$ in $Q_0,Q_1,\dots,Q_N,Q_*$ satisfies
\begin{equation}\label{eq:overlap}
|Q\cap Q'|\ge\kappa|Q|,\qquad |Q\cap Q'|\ge\kappa|Q'|.
\end{equation}
The number $L=N+1$ of links satisfies
\begin{equation}\label{eq:numberlinks}
L\le1+\frac{\log(d/\delta)}{\log(1+\vartheta)}.
\end{equation}

For a ball $Q$ in the chain write $A(Q):=\fint_Qv$. Applying \eqref{eq:localL1} to the doubled ball supplied by \eqref{eq:convexballs}, and using $r_{j}\le1$ for all $j$, we obtain
\begin{equation}\label{eq:chainlocal}
A(Q)\le C_L(\inf_Qv+E_h).
\end{equation}
If $Q,Q'$ are consecutive and $E=Q\cap Q'$, then $v\ge0$ and
\eqref{eq:overlap} imply
\[
\inf_Qv\le\fint_Ev\le\frac{|Q'|}{|E|}A(Q')\le\kappa^{-1}A(Q').
\]
Hence,
\begin{equation}\label{eq:link}
A(Q)+E_h\le K_0(A(Q')+E_h),\qquad K_0:=1+C_L(1+\kappa^{-1}),
\end{equation}
and the same inequality holds with $Q$ and $Q'$ interchanged. Define
\begin{equation}\label{eq:alpha}
\alpha:=\frac{\log K_0}{\log(1+\vartheta)}>0.
\end{equation}
Iterating \eqref{eq:link}, using \eqref{eq:numberlinks} and $d\le1$, yields that for any $\zeta\in\Sigma_{\lambda,\delta}$, we have
\begin{align}
A(B_{\delta/2}(z))+E_h&\le C\delta^{-\alpha}(A(Q_*)+E_h),\label{eq:chain1}\\
A(Q_*)+E_h&\le C\delta^{-\alpha}(A(B_{\delta/2}(\zeta))+E_h).\label{eq:chain2}
\end{align}
Let
$m_\delta:=\inf_{\Sigma_{\lambda,\delta}}v$ and choose $\zeta_j\in\Sigma_{\lambda,\delta}$ such that $v(\zeta_j)\to m_\delta$. By \eqref{eq:localL1} with radius $R=\delta$, gives
\[
A(B_{\delta/2}(\zeta_j)) \le C_L(v(\zeta_j)+\delta^{2s}E_h) \le C_L(v(\zeta_j)+E_h).
\]
Use this in \eqref{eq:chain2}, let $j\to\infty$, and substitute the result in \eqref{eq:chain1}. We obtain \eqref{eq:propestimate} with
\begin{equation}\label{eq:beta-effective}
\beta=2\alpha =\frac{2\log K_0}{\log(1+\vartheta)}.
\end{equation}
\end{proof}

\subsection{Antisymmetric maximum principles}
We use the notation $H_\lambda^-$, $T_\lambda$ and $x^\lambda$ from Section~2.1. The two principles below are standard in nonlocal moving planes. Narrow-region principles for antisymmetric functions are from \cite[Theorem~2.3]{CLL17}. Small-volume versions for general kernels were established  in \cite{JW16}. Quantitative adaptations for Serrin-type problems are in \cite[Proposition~3.1]{DPTV23Serrin}. We give a quantitative version for both  maximum principles because the scales $\delta^{2s}$ and $|D|^{2s/n}$ matter later. The short proof makes constants and signs explicit. The $\{x_1>\lambda\}$ version follows by reflection.

\begin{proposition}[Antisymmetric narrow-region and small-volume estimates]\label{prop:maxprinciples}
For $0<s<1$, let $D\subset H_\lambda^-$ be a bounded open set and  $c,f:D\to\R$ be bounded functions. Suppose $w\in\mathcal L^{2s}(\R^n)\cap C^{1,1}_{\mathrm{loc}}(D)$ and is an upper semicontinuous function on $\overline D$ satisfying
\begin{equation}\label{eq:antiMPass}
\begin{cases}
(-\Delta)^sw+c(x)w\le f(x),&\text{in } D,\\
w(x^\lambda)=-w(x),&\text{in } H_\lambda^-,\\
%w=0,&x\in T_\lambda,\\
w\le0,&\text{in } \overline{H_\lambda^-}\setminus D.
\end{cases}
\end{equation}
Define $c_0:=\sup\limits_{D} c_-$ and $F_0:=\sup\limits_{D} f_+.$
\begin{enumerate}[label=\textup{(\roman*)}]
\item There exist $\delta_0>0$ and $C>0$, depending only on $n,s,c_0$ such that if $D\subset\{x\in H_\lambda^-:\lambda-\delta<x_1<\lambda\},$ then for $0<\delta\le\delta_0$,
\begin{equation}\label{eq:narrow}
\sup_Dw_+\le C\delta^{2s}F_0.
\end{equation}
%If $c_0=0$, no restriction on $\delta$ is needed.

\item There exist $m_0>0$ and $C>0$, depending only on $n,s,c_0$ such that, if $|D|\le m_0$,
\begin{equation}\label{eq:smallvolume}
\sup_Dw_+\le C|D|^{2s/n}F_0.
\end{equation}
%If $c_0=0$, no smallness restriction on $|D|$ is needed.
\end{enumerate}
\end{proposition}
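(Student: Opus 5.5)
We argue by evaluating the operator at a positive maximum point of $w$. Both parts reduce to one pointwise inequality for the antisymmetric nonlocal operator. Let $m:=\sup_D w_+$ and assume $m>0$. Since $w$ is upper semicontinuous on $\overline D$ and $w\le0$ on $\overline{H_\lambda^-}\setminus D$, the supremum over $\overline{H_\lambda^-}$ is attained at some $\bar x\in D$ with $w(\bar x)=m$.

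\emph{Lower bound for $(-\Delta)^s w(\bar x)$.} Split the integral over $H_\lambda^-$ and $H_\lambda^+$, and use antisymmetry in the second piece:
\[
(-\Delta)^s w(\bar x)=c_{n,s}\PV\!\int_{H_\lambda^-}\bigl(m-w(y)\bigr)\frac{dy}{|\bar x-y|^{n+2s}}+c_{n,s}\int_{H_\lambda^-}\bigl(m+w(y)\bigr)\frac{dy}{|\bar x-y^\lambda|^{n+2s}}.
\]
Since $|\bar x-y^\lambda|\ge|\bar x-y|$ for $\bar x,y\in H_\lambda^-$ and $m-w(y)\ge0$, we can combine the two terms:
\[
(-\Delta)^s w(\bar x)\ge c_{n,s}\,m\int_{H_\lambda^-}\Bigl(\frac{1}{|\bar x-y|^{n+2s}}-\frac1{|\bar x-y^\lambda|^{n+2s}}\Bigr)dy+2c_{n,s}\,m\int_{H_\lambda^-}\frac{dy}{|\bar x-y^\lambda|^{n+2s}}
= c_{n,s}\,m\int_{\R^n\setminus(H^-_\lambda\setminus D)\cup(H_\lambda^-\setminus D)}\cdots .
\]
It is simpler to proceed as follows. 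On $H_\lambda^-\setminus D$ we have $m-w(y)\ge m$. The remaining terms are nonnegative, except that $m+w(y)$ could be negative where $w<-m$. This is handled because $(m-w(y))|\bar x-y|^{-n-2s}+(m+w(y))|\bar x-y^\lambda|^{-n-2s}\ge 2m|\bar x-y^\lambda|^{-n-2s}\ge0$. The outcome is
\[
(-\Delta)^s w(\bar x)\ge c_{n,s}\,m\int_{H_\lambda^-\setminus D}\Bigl(\frac1{|\bar x-y|^{n+2s}}-\frac1{|\bar x-y^\lambda|^{n+2s}}\Bigr)dy+2c_{n,s}m\int_{H_\lambda^+}\frac{dy}{|\bar x-y|^{n+2s}}\ge c_{n,s}\,m\int_{\R^n\setminus D}\frac{dy}{|\bar x-y|^{n+2s}},
\]
where the last step uses the reflection $H_\lambda^+\ni y\mapsto y^\lambda$ to compensate the subtracted term.

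\emph{Geometric lower bounds on $\int_{\R^n\setminus D}|\bar x-y|^{-n-2s}dy$.} For (i), $\R^n\setminus D\supset\{y_1\ge\lambda\}\cup\{y_1\le\lambda-\delta\}$. Consider the strip of width $\delta$ about $\bar x$ in the $y_1$ direction: a half-space at distance $\le\delta$ from $\bar x$ contributes at least $c\,\delta^{-2s}$. For (ii), the standard rearrangement bound applies: the integral over the complement of $D$ is at least the integral outside the ball $B_r(\bar x)$ with $|B_r|=|D|$, which equals $c|D|^{-2s/n}$. In either case we get $(-\Delta)^sw(\bar x)\ge c_1 m\,\ell^{-2s}$, with $\ell=\delta$ or $\ell=|D|^{1/n}$.

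\emph{Conclusion.} From the equation at $\bar x$, $c_1m\ell^{-2s}-c_0m\le(-\Delta)^sw(\bar x)+c(\bar x)m\le F_0$. Choose $\delta_0$ (respectively $m_0$) so that $c_0\ell^{2s}\le c_1/2$. Then $m\le 2c_1^{-1}\ell^{2s}F_0$, which is \eqref{eq:narrow} and \eqref{eq:smallvolume}. The main obstacle is the first step: establishing the pointwise lower bound cleanly. This means handling the sign of $w$ on $H_\lambda^-\setminus D$ (only $w\le0$ there) and justifying that $\bar x$ lies in $D$, where $w$ is $C^{1,1}$ so that the principal value is pointwise defined. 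Both points follow from the pairing inequality above.
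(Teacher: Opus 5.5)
Your proof is correct and is essentially the paper's argument: a positive maximum at $\bar x\in D$, the pairing of $y$ with $y^\lambda$ to make the integrand nonnegative, the half-space $\{y_1\ge\lambda\}$ (at distance less than $\delta$ from $\bar x$) for (i), and a measure count for (ii); your unified bound $(-\Delta)^s w(\bar x)\ge c_{n,s}\,m\int_{\R^n\setminus D}|\bar x-y|^{-n-2s}\,dy$, followed by the bathtub comparison with a ball $B_r(\bar x)$ of measure $|D|$, is only a slightly sharper packaging of the paper's half-ball estimate. Do delete the abandoned first chain of inequalities, though: replacing $m-w(y)$ by $m$ is legitimate only off $D$, and as written its right-hand side is $+\infty$, since $|\bar x-y|^{-n-2s}$ is not integrable near $\bar x$.
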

\begin{remark}
    If $c_0=0$, then for (i), no restriction on $\delta$ is needed, and for (ii), no smallness restriction on $|D|$ is needed.
\end{remark}
\begin{proof}
Without loss of generality, we assume $M_0:=\max_{\overline D}w_+>0$. By upper semicontinuity and the last condition in \eqref{eq:antiMPass}, we get that there exists a point $x_0\in D$ such that $w(x_0)=M_0$. Moreover $w(y)\le M_0$ for every $y\in H_\lambda^-$. Reflecting the complementary half-space into $H_\lambda^-$ gives
\begin{equation}
\begin{aligned}
(-\Delta)^sw(x_0)=c_{n,s}\int_{\R^{n}}&\frac{M_{0}-w(y)}{|x_0-y|^{n+2s}}dy,\\
=c_{n,s}\int_{H_\lambda^-}&\Bigg[(M_0-w(y))\left(\frac1{|x_0-y|^{n+2s}}-\frac1{|x_0-y^\lambda|^{n+2s}}\right) \\
&+\frac{2M_0}{|x_0-y^\lambda|^{n+2s}}\Bigg]dy.\label{eq:MPpair}
\end{aligned}
\end{equation}
The integrals in \eqref{eq:MPpair} are absolutely convergent. Moreover, since for $x_0,y\in H_\lambda^-$ one has $|x_0-y^\lambda|>|x_0-y|$, so both terms in the integral are nonnegative.

For part (i), we obtain
\begin{equation}\label{eq:x_0}
    (-\Delta)^sw(x_0) \ge2c_{n,s}M_0\int_{H_\lambda^-}\frac{dy}{|x_0-y^\lambda|^{n+2s}}\ge\zeta_{n,s}\delta^{-2s}M_0,
\end{equation}
where
\[
\zeta_{n,s}:=2c_{n,s}\int_{\{z_1>1\}}|z|^{-n-2s}\,dz>0.
\]
Combining \eqref{eq:antiMPass} and \eqref{eq:x_0}, we get
\begin{equation}\label{eq:F_0}
    F_0\ge f(x_0) \ge(\zeta_{n,s}\delta^{-2s}-c_0)M_0.
\end{equation}
If $c_0>0$, choose $\delta_0$ so that $c_0\delta_0^{2s}\le\zeta_{n,s}/2$. This proves \eqref{eq:narrow}. If $c_0=0$, \eqref{eq:F_0} directly gives \eqref{eq:narrow} for every $\delta>0$.

For part (ii), $w\le0$ in $H_\lambda^-\setminus D$, so $M_0-w(y)\ge M_0$ there. Hence, using $M_0:=\max_{\overline D}w_+>0$ and $|x_0-y^\lambda|>|x_0-y|$ for all $x_0,y\in H_\lambda^-$, we obtain
\begin{equation}
\begin{aligned}
(-\Delta)^sw(x_0)
&\ge c_{n,s}\int_{H_\lambda^-\backslash D}M_{0}\left(\frac1{|x_0-y|^{n+2s}}-\frac1{|x_0-y^\lambda|^{n+2s}}\right)dy\\
&\quad+c_{n,s}\int_{H_{\lambda}^{-}}\frac{2M_0}{|x_0-y^\lambda|^{n+2s}}dy,\\
%&\ge c_{n,s}\int_{H_\lambda^-\backslash D}M_{0}\left(\frac1{|x_0-y|^{n+2s}}+\frac1{|x_0-y^\lambda|^{n+2s}}\right)dy,\\
&\ge c_{n,s}M_0\int_{H_\lambda^-\setminus D}\frac{dy}{|x_0-y|^{n+2s}}.
\end{aligned}
\end{equation}
Let $\omega_n=|B_1|$ and put
\[
r:=\left(\frac{4|D|}{\omega_n}\right)^{1/n}.
\]
Because $x_0\in H_\lambda^-$, 
\[
|H_\lambda^-\cap B_r(x_0)|\ge\frac12\omega_nr^n=2|D|,
\]
so
\[
|(H_\lambda^-\setminus D)\cap B_r(x_0)|\ge|D|.
\]
Consequently
\begin{equation}\label{eq:int-loss}
    \int_{H_\lambda^-\setminus D}\frac{dy}{|x_0-y|^{n+2s}}\ge r^{-n-2s}|D|=c_n|D|^{-2s/n}.
\end{equation}
Combining \eqref{eq:antiMPass} with \eqref{eq:int-loss}, we get
\[
F_0\ge(c|D|^{-2s/n}-c_0)M_0.
\]
If $c_0>0$, choose $m_0>0$ so that $c_0\le(c/2)m_0^{-2s/n}$. This gives \eqref{eq:smallvolume}. If $c_0=0$, no restriction on $|D|$ is needed.
\end{proof}
\subsection{Boundary regularity and the Green function}
We shall use the following global estimate of Ros-Oton and Serra \cite{ROS14} and Green representation \cite{Bucur16}.

\begin{proposition}\label{prop:boundaryreg}
Let $\Omega\subset\R^n$ be a bounded $C^{1,1}$ domain and
$q\in L^\infty(\Omega)$. If $u$ is a classical solution of
\[
(-\Delta)^su=q\quad\text{in }\Omega,\qquad u=0\quad\text{a.e.\ in }\R^n\setminus\Omega,
\]
then $u$ has a representative in $C^s(\R^n)$ and
\[
\|u\|_{C^s(\R^n)}\le C\|q\|_{L^\infty(\Omega)},
\]
where $C$ depends only on $n,s,\Omega$.
\end{proposition}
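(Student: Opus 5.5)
This is the global $C^s$ estimate of Ros-Oton and Serra \cite{ROS14}, and we would essentially quote it. Still, the argument we have in mind runs in two steps: an $L^\infty$ bound via a barrier, then a Hölder bound obtained by combining interior estimates with a boundary decay estimate.

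\textbf{Step 1: $L^\infty$ bound.} Choose $R$ with $\Omega\subset B_R(0)$. The torsion function $\phi(x)=c(R^2-|x|^2)_+^s$ solves $(-\Delta)^s\phi=1$ in $B_R$ for a suitable explicit constant $c=c(n,s)$. The functions $\pm u-\|q\|_{L^\infty}\phi$ are then subsolutions in $\Omega$ and are $\le0$ outside $\Omega$. Since $u\in X_0^s$ is a weak solution, the weak maximum principle gives $\|u\|_{L^\infty}\le C\|q\|_{L^\infty}R^{2s}$.

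\textbf{Step 2: boundary decay.} We aim for the estimate $|u(x)|\le C\|q\|_\infty\,\dist(x,\partial\Omega)^s$. Because $\Omega$ is $C^{1,1}$, it satisfies a uniform exterior ball condition with some radius $\rho$. At each boundary point $x_0$, take the exterior tangent ball $B_\rho(y_0)$ and use a supersolution of the form
\[
\psi(x)=C_1\bigl(|x-y_0|^2-\rho^2\bigr)_+^s\wedge C_2 ,
\]
suitably truncated and corrected. One checks that $(-\Delta)^s\psi\ge1$ in $\Omega\cap B_{2\rho}(y_0)$. Comparing $\pm u$ with $\|q\|_\infty\psi$, using Step 1 to control $u$ away from $x_0$, gives the decay.

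\textbf{Step 3: from decay to $C^s$.} Fix $x,y$, let $r=|x-y|$ and $d=\min(\dist(x,\partial\Omega),\dist(y,\partial\Omega))$.
\begin{itemize}
\item If $r\ge d/2$, then both points lie within distance about $3r$ of $\partial\Omega$, and Step 2 gives $|u(x)-u(y)|\le C\|q\|_\infty r^s$.
\item If $r<d/2$, rescale $B_d(x)$ to the unit ball. Apply the interior estimate $\|w\|_{C^{s}(B_{1/2})}\le C(\|w\|_{L^\infty(B_1)}+\|\mathrm{tail}\|+\|(-\Delta)^s w\|_\infty)$ to $w(z)=u(x+dz)$. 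Here the $L^\infty$ norm of $w$ on $B_1$ is $\lesssim d^s\|q\|_\infty$ by Step 2. The tail is controlled using $|u(x+dz)|\le C\|q\|_\infty d^s(1+|z|)^s$ and the integrability of $|z|^{s-n-2s}$ at infinity.
\end{itemize}
Scaling back gives $[u]_{C^s}\le C\|q\|_\infty$.

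The main obstacle is the barrier in Step 2. One has to compute the fractional Laplacian of the exterior-ball function and verify it is bounded below near the boundary. This is exactly the computation done in \cite{ROS14}, so in practice we would cite their Lemma 2.7 and Proposition 1.1.
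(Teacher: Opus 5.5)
Your proposal matches the paper, which gives no proof of its own and simply quotes the global $C^s$ estimate of Ros-Oton and Serra \cite{ROS14}. Your outline is the structure of their proof: an $L^\infty$ bound from the torsion function of a ball containing $\Omega$, the decay $|u|\le C\|q\|_{L^\infty}\dist(x,\partial\Omega)^s$ from an exterior-ball barrier, and rescaled interior estimates with tail control. For the one step you leave unchecked, a clean explicit barrier is the Kelvin transform $|x|^{-n}(|x|^2-1)_+^s$ of the torsion function, whose fractional Laplacian is a positive multiple of $|x|^{-n-2s}$ in $\R^n\setminus\overline{B_1}$; this avoids any delicate truncation of the pure power $(|x|^2-1)_+^s$, which is not even in $\mathcal L_{2s}$.
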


Let 
\begin{equation}\label{eq:GreenR}
G_{R}(x,y)=\kappa_{n,s}|x-y|^{2s-n}\int_0^{\rho_{R}(x,y)}\frac{t^{s-1}}{(1+t)^{n/2}}\,dt,
\end{equation}
where
\begin{equation}
\rho_R(x,y)=\frac{(R^2-|x|^2)(R^2-|y|^2)}{R^2|x-y|^2},\qquad \kappa_{n,s}=\frac{\Gamma(n/2)}{2^{2s}\pi^{n/2}\Gamma(s)^2}.
\end{equation} 

Define the bilinear form
\[
\E(v,w)=\frac{C_{n,s}}2\iint_{\R^n\times\R^n}\frac{(v(x)-v(y))(w(x)-w(y))}{|x-y|^{n+2s}}\,dx\,dy.
\]
A weak solution is a function $u\in X_0^s(B_R)$ satisfying for every $\varphi\in X_0^s(B_R)$,
\begin{equation}\label{eq:weak}
\E(u,\varphi)=\int_{B_R}h\varphi\, dx.
\end{equation}
Then we obtain the Green representation for a weak solution in a ball. The proof of the proposition is similar to that of \cite[Theorem~3.2]{Bucur16}.
\begin{proposition}\label{prop:green}
For $n>2s$, let $h\in L^\infty(B_R)$ and define
\begin{equation}\label{eq:green}
u(x)=
\begin{cases}
\displaystyle\int_{B_R}G_R(x,y)h(y)\, d y,&x\in B_R,\\[1mm]
0,&x\in\R^n\setminus B_R.
\end{cases}
\end{equation}
Then $u$ is the unique weak solution of
\begin{equation}\label{eq:problem}
\begin{cases}
(-\Delta)^s u=h&\text{in }B_R,\\
u=0&\text{in }\R^n\setminus B_R.
\end{cases}
\end{equation}
\end{proposition}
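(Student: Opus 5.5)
We need to prove Proposition~\ref{prop:green}: that the Green integral \eqref{eq:green} with $h\in L^\infty(B_R)$ gives the unique weak solution of \eqref{eq:problem}.

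\textbf{Uniqueness.} This comes first and is immediate. If $u_1,u_2\in X_0^s(B_R)$ both satisfy \eqref{eq:weak}, then $w=u_1-u_2\in X_0^s(B_R)$ and $\E(w,\varphi)=0$ for all $\varphi$. Taking $\varphi=w$ gives $\E(w,w)=0$. Hence $w$ is constant on $\R^n$, and since $w=0$ outside $B_R$, we get $w\equiv0$.

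\textbf{Reduction to $R=1$.} Next we reduce to $R=1$ by scaling. One checks directly from \eqref{eq:GreenR} that $\rho_R(Rx,Ry)=\rho_1(x,y)$, and therefore $G_R(Rx,Ry)=R^{2s-n}G_1(x,y)$. The bilinear form scales compatibly: if $u_R(x)=u(x/R)$, then $\E(u_R,\varphi_R)=R^{n-2s}\E(u,\varphi)$. So it suffices to treat $B_1$.

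\textbf{Existence for smooth data.} For $h\in C_c^\infty(B_1)$, we use the classical fact (Riesz/Blumenthal--Getoor--Ray, as in \cite{Bucur16}) that $u=\int G_1(\cdot,y)h(y)\,dy$ is the pointwise solution. The plan for verifying this in \cite{Bucur16} has two parts. First, write $G_1(x,y)=\Phi(x-y)-H(x,y)$, where $\Phi=\kappa'|x|^{2s-n}$ is the fundamental solution, $n>2s$. Second, show that $H(\cdot,y)$ is $s$-harmonic in $B_1$ and equals $\Phi(\cdot-y)$ outside $B_1$, via the Poisson kernel $P(x,z)=c\,\bigl((1-|x|^2)/(|z|^2-1)\bigr)^s|x-z|^{-n}$. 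Integrating against $h$, $\Phi*h$ solves $(-\Delta)^s=h$ in $\R^n$ and the harmonic correction enforces the exterior condition. The resulting $u$ is $C^s(\R^n)$ (Proposition~\ref{prop:boundaryreg}), behaves like $(1-|x|)^s$, and lies in $C^{1,1}_{\rm loc}(B_1)$. For such a classical solution, integrating $(-\Delta)^su\,\varphi$ and symmetrizing, which is justified since $u\in H^s(\R^n)$ by the boundary behaviour, yields \eqref{eq:weak}. In particular $u\in X_0^s(B_1)$.

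\textbf{Approximation for general $h$.} For general $h\in L^\infty(B_1)$, take $h_k\in C_c^\infty(B_1)$ with $h_k\to h$ in $L^2$ and $\|h_k\|_\infty\le\|h\|_\infty$. The corresponding $u_k$ satisfy the energy bound $\E(u_k,u_k)=\int h_ku_k\le C\|h_k\|_{L^2}\,\E(u_k,u_k)^{1/2}$, by fractional Poincar\'e on $X_0^s$. Applying the same bound to differences shows $u_k$ is Cauchy in $X_0^s$, so it converges to a weak solution $\bar u$. On the other hand, $u_k(x)\to\int G_1(x,y)h(y)\,dy$ pointwise by dominated convergence, since $G_1(x,\cdot)\le C|x-\cdot|^{2s-n}\in L^1(B_1)$. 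Hence $\bar u$ coincides with \eqref{eq:green}.

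\textbf{Main obstacle.} The delicate step is the smooth-data verification that the explicit kernel \eqref{eq:GreenR} is the Green function, i.e.\ the Poisson-kernel identity for $H$. I intend to cite \cite[Theorem~3.2]{Bucur16} for this computation rather than reproduce it.
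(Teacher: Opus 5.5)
Your argument is correct, and it does more than the paper's proof, which consists only of the remark that the result follows ``similarly to \cite[Theorem~3.2]{Bucur16}''. That is, the paper proposes to redo Bucur's direct verification: split $G_R=\Phi-H$ and identify $H$ through the Poisson kernel. Bucur's theorem, however, is stated for H\"older data $h\in C^{2s+\epsilon}(B_r)\cap C(\overline{B_r})$. It also only gives uniqueness among pointwise continuous solutions.

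The proposition instead asserts the weak formulation for $h\in L^\infty$, with uniqueness in $X_0^s(B_R)$. This is what Lemma~\ref{lem:lower} actually uses, since there $q=k\,g(u)$ is merely bounded and continuous in $B_1$.

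You use Bucur's result only within its hypotheses and close the remaining gap with soft arguments:
\begin{itemize}
\item uniqueness from the coercivity of $\E$;
\item scaling to $R=1$;
\item passage from smooth to $L^\infty$ data via energy estimates and Poincar\'e;
\item dominated convergence of the Green integrals, along a subsequence on which $h_k\to h$ a.e.
\end{itemize}
The paper's route, if carried out, would avoid approximation. It would, however, require Fubini arguments with the singular kernel and a direct proof of $H^s$-membership for rough $h$. Your route replaces this with a pointwise-to-weak bridge at the smooth level.

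That bridge is the one step to tighten. Proposition~\ref{prop:boundaryreg}, as stated in the paper, presupposes a classical solution, hence $u\in X_0^s$. Invoking it for the Green integral is therefore circular. Moreover, $C^s$ regularity together with $(1-|x|)^s$ decay does not by itself give $u\in H^s(\R^n)$. You would also need an interior bound such as $|\nabla u|\le C(1-|x|)^{s-1}$.

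The cleanest fix is to reverse the direction:
\begin{enumerate}
\item For $h_k\in C_c^\infty(B_1)$, take the Lax--Milgram solution $\bar u_k\in X_0^s(B_1)$.
\item By Ros-Oton--Serra, $\bar u_k\in C^s(\R^n)$.
\item By interior regularity, $\bar u_k\in C^{2s+\alpha}_{\rm loc}(B_1)$, so it is a continuous pointwise solution.
\item Bucur's uniqueness of pointwise continuous solutions then gives $\bar u_k=u_k$.
\end{enumerate}
With this change, your Cauchy-sequence and identification steps go through unchanged.
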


\section{Approximate symmetry}

We first establish the lower boundary estimate used in the proof for Theorem \ref{thm:main}.
\begin{lemma}\label{lem:lower}
Let $n\ge2$ and $0<s<1$. Suppose $u\in X_{0}^{s}(B_{1})\cap C^{1,1}_{loc}(B_{1})$ satisfies
\[
(-\Delta)^su=q(x)\quad\text{in }B_1,\qquad u=0\quad\text{in }\R^n\setminus B_1,
\]
where $q\in L^\infty(B_1)$ and $q\ge0$. Assume
\begin{equation}\label{eq:u neq 0}
    \frac1{C_0}\le\|u\|_{L^\infty(B_1)}\le C_0.
\end{equation}
Then there exists $c_*>0$, depending only on $n,s,C_0$ and $\|q\|_{L^\infty(B_1)}$, such that for all $x\in B_1$,
\begin{equation}\label{eq:lowerbd}
u(x)\ge c_*(1-|x|)^s.
\end{equation}
\end{lemma}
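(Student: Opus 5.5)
We plan to combine the Green representation of Proposition~\ref{prop:green} with the global $C^s$ bound of Proposition~\ref{prop:boundaryreg}. Since $n\ge2>2s$, Proposition~\ref{prop:green} applies with $R=1$ and $h=q$. By uniqueness of weak solutions (a classical solution in $X_0^s(B_1)$ is a weak one), we get
\[
u(x)=\int_{B_1}G_1(x,y)q(y)\,dy ,\qquad x\in B_1 .
\]
The proof then rests on two facts: a lower bound of $G_1(x,y)$ by $(1-|x|)^s$ times a weight in $y$, and a positive lower bound for the mass of $q$ against that weight, forced by \eqref{eq:u neq 0}.

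\emph{Step 1: Green function lower bound.} Fix the interior ball $B_{1/2}$. For $x\in B_1$ and $y\in B_{1/2}$ we have $|x-y|\le 3/2$ and $1-|y|^2\ge 3/4$, hence $\rho_1(x,y)\ge c(1-|x|)$.

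If $\rho_1\le1$, the integrand in \eqref{eq:GreenR} satisfies $t^{s-1}(1+t)^{-n/2}\ge 2^{-n/2}t^{s-1}$. Therefore
\[
\int_0^{\rho_1}\frac{t^{s-1}}{(1+t)^{n/2}}\,dt\ge c\rho_1^s .
\]
If $\rho_1>1$, the integral is at least a fixed positive constant, which is $\ge c(1-|x|)^s$ because $1-|x|\le1$.

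Combining this with $|x-y|^{2s-n}\ge (3/2)^{2s-n}$, we obtain, for all $x\in B_1$ and $y\in B_{1/2}$,
\[
G_1(x,y)\ge c_1(1-|x|)^s ,
\]
with $c_1=c_1(n,s)$. Since $q\ge0$ and $G_1\ge0$, discarding $B_1\setminus B_{1/2}$ gives
\[
u(x)\ge c_1(1-|x|)^s\int_{B_{1/2}}q .
\]

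\emph{Step 2: mass of $q$ in the interior (main obstacle).} We must rule out the possibility that $q$ concentrates near $\partial B_1$, making $\int_{B_{1/2}}q$ tiny while $\|u\|_\infty\ge 1/C_0$. A fixed interior ball is not enough on its own, so we plan to argue as follows.

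First, by Proposition~\ref{prop:boundaryreg}, $\|u\|_{C^s(\R^n)}\le C\|q\|_\infty=:A$ and $u=0$ on $\partial B_1$. Hence $u(x)\le A(1-|x|)^s$. Choosing $r_0<1$ with $A(1-r_0)^s\le 1/(2C_0)$, the maximum of $u$ is attained at some $x_0$ with $|x_0|\le r_0$ and $u(x_0)\ge 1/C_0$.

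Next we use the upper Green bound. Since $\rho_1(x,y)\le (1-|x|^2)(1-|y|^2)/|x-y|^2$, and the $t$-integral is $\lesssim \min(\rho_1^s,\,C)$ (it converges for $n\ge2>2s$), we get
\[
G_1(x_0,y)\le C|x_0-y|^{2s-n}\min\bigl(1,(1-|y|)^s|x_0-y|^{-2s}\bigr).
\]
Split the representation of $u(x_0)$ into three regions:
\begin{itemize}[label={}]
\item the shell $\{|y|>1-\eta\}$, whose contribution is at most $C\|q\|_\infty\eta^s(1-r_0)^{-n}$, using $|x_0-y|\ge (1-r_0)/2$ there;
\item a small ball $B_\epsilon(x_0)$, whose contribution is at most $C\|q\|_\infty\epsilon^{2s}$;
\item the rest, where $G_1\le C(\epsilon,r_0)$.
\end{itemize}
Choosing $\eta,\epsilon$ small in terms of $n,s,C_0,\|q\|_\infty$ makes the first two contributions at most $1/(4C_0)$ in total. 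This yields
\[
\int_{B_{1-\eta}}q\ge c_2>0 .
\]

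\emph{Step 3: conclusion.} Finally, redo Step 1 with $B_{1/2}$ replaced by $B_{1-\eta}$. The same computation gives $G_1(x,y)\ge c(\eta)(1-|x|)^s$ for $y\in B_{1-\eta}$, using $1-|y|^2\ge\eta$ and $|x-y|\le2$. Combining with Step 2 gives \eqref{eq:lowerbd} with
\[
c_*=c(\eta)\,c_2 ,
\]
which depends only on $n,s,C_0$ and $\|q\|_{L^\infty(B_1)}$.
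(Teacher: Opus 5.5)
Your proposal is correct and follows essentially the paper's route: Green representation, a factorized lower bound on $G_1$, and a positive lower bound on the mass of $q$ away from $\partial B_1$. That mass bound is extracted from $u(x_0)\ge 1/C_0$ at an interior point, using the $C^s$ bound and upper estimates of $G_1(x_0,\cdot)$ near $x_0$ and near the boundary. The only difference is cosmetic. The paper works with the weighted moment $\int_{B_1}d_B(y)^s q\,dy$ and the global bound $G(x,y)\ge c\,d_B(x)^s d_B(y)^s$, whereas you cut off a boundary shell and use the unweighted mass of $q$ on $B_{1-\eta}$.
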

\begin{proof}[Proof of Lemma \ref{lem:lower}]
Put $Q:=\|q\|_{L^\infty(B_1)}$. Without loss of generality, we assume $Q>0$. Otherwise, the strong maximum principle would give $u\equiv0$, which contradicts \eqref{eq:u neq 0}. By Proposition~\ref{prop:boundaryreg},
\begin{equation}\label{eq:holderu}
\|u\|_{C^s(\R^n)}\le C_*:=C(n,s)Q.
\end{equation}
Choose $x_0\in B_1$ with $u(x_0)\ge(2C_0)^{-1}$. If $z_0\in\partial B_1$ satisfies $|x_0-z_0|=1-|x_0|$, then $u(z_0)=0$, and \eqref{eq:holderu} yields
\[
\frac1{2C_0}\leq u(x_{0})-u(z_{0})\le C_*(1-|x_0|)^s.
\]
Hence,
\begin{equation}\label{eq:d0}
d_B(x_0):=1-|x_0|\ge d_0:=\min\left\{\frac12,\left(\frac1{2C_0C_*}\right)^{1/s}\right\}>0.
\end{equation}

Let $G(x,y)=G_{1}(x,y)$ be the Green function in \eqref{eq:GreenR}. Then, by 
\begin{equation}\label{eq:Green}
G(x,y)=\kappa_{n,s}|x-y|^{2s-n}\int_0^{\rho(x,y)}\frac{t^{s-1}}{(1+t)^{n/2}}\,dt,\qquad\rho(x,y)=\frac{(1-|x|^2)(1-|y|^2)}{|x-y|^2}.
\end{equation}
We have three consequences of \eqref{eq:Green}.

First, there is $c=c(n,s)>0$ such that for all $x,y\in B_1$,
\begin{equation}\label{eq:Greenlower}
G(x,y)\ge c\,d_B(x)^s\,d_B(y)^s,
\end{equation}
Indeed, put
$A=(1-|x|^2)(1-|y|^2),$ and thus, $\rho=A/|x-y|^2.$
If $\rho\le1$, then 
\[
G(x,y)\ge c|x-y|^{2s-n}\int_0^{\rho(x,y)}t^{s-1}\,dt=c|x-y|^{2s-n}\rho^{s}= cA^s|x-y|^{-n}\ge c\,d_B(x)^s\,d_B(y)^s,
\]
because $|x-y|\le2$. If $\rho>1$,
\[
G(x,y)\ge c|x-y|^{2s-n}\int_0^{1}\frac{t^{s-1}}{(1+t)^{n/2}}\,dt=c|x-y|^{2s-n}\ge cd_B(x)^s\,d_B(y)^s.
\]
since $d_B(x)^s\,d_B(y)^s\le1$ and $|x-y|^{2s}\le2^{2s}$.

Second, for $0<r\le d_0/4$,
\begin{equation}\label{eq:Greenlocal}
\int_{B_r(x_0)}G(x_0,y)\,dy\le Cr^{2s}=:\omega_0(r).
\end{equation}
since
\begin{equation}\label{eq:G-2}
\begin{aligned}
    G(x_0,y)\le C|x-y|^{2s-n}\int_0^{+\infty}\frac{t^{s-1}}{(1+t)^{n/2}}\,dt\le C|x_0-y|^{2s-n}.
\end{aligned}  
\end{equation}
The integral in \eqref{eq:G-2} converges by $n>2s$.

Third, for every fixed $0<r\le d_0/4$ there is $C=C(n,s,d_0,r)$ such that for all $|y-x_0|\ge r$,
\begin{equation}\label{eq:Greenupper}
G(x_0,y)\le C\,d_B(y)^s.
\end{equation}
To see this, if $\rho(x_0,y)\le1$, then
\begin{equation}\label{eq:G-2A}
    G(x_{0},y)\le C|x_{0}-y|^{2s-n}\int_0^{\rho(x_{0},y)}t^{s-1}\,dt\le C\rho^{s}\le Cd_{B}(y)^{s}.
\end{equation}
If $\rho(x_0,y)>1$, then
\[
(1-|x_0|^2)(1-|y|^2)>|x_0-y|^2\ge r^2,
\]
so $d_B(y)\ge r^2/2$. Hence,
\begin{equation}\label{eq:G-2B}
    G(x_0,y)\le C|x_{0}-y|^{2s-n}\int_0^{+\infty}\frac{t^{s-1}}{(1+t)^{n/2}}\,dt\le C\leq Cd_{B}(y)^{s}.
\end{equation}
Then \eqref{eq:G-2A} and \eqref{eq:G-2B} implies \eqref{eq:Greenupper}.

By Proposition \ref{prop:green},
\begin{equation}\label{eq:Greenrep}
u(x)=\int_{B_1}G(x,y)q(y)\,dy.
\end{equation}
Choose $r\le d_0/4$, depending only on the data such that
\[
Q\omega_0(r)\le\frac1{4C_0}.
\]
Then \eqref{eq:Greenlocal}, \eqref{eq:Greenrep} and $u(x_0)\ge(2C_0)^{-1}$ give
\[
\int_{B_1\setminus B_r(x_0)}G(x_0,y)q(y)\,dy\ge u(x_0)-\omega_{0}(r)Q\geq\frac1{4C_0}.
\]
Using \eqref{eq:Greenupper} and $q\ge0$, we obtain
\begin{equation}\label{eq:moment}
\int_{B_1}d_B(y)^s q(y)\,dy\ge\int_{B_1\setminus B_r(x_0)}d_B(y)^s q(y)\,dy \ge C\int_{B_1\setminus B_r(x_0)}G(x_0,y)q(y)\,dy\ge c_0,
\end{equation}
with $c_0>0$ depending only on the data.

Finally, \eqref{eq:Greenlower}, \eqref{eq:Greenrep} and \eqref{eq:moment} yield
\[
u(x)\ge c\,d_B(x)^s\int_{B_1}d_B(y)^s q(y)\,dy\ge c_*\,d_B(x)^s.
\]
This proves \eqref{eq:lowerbd}.
\end{proof}

\subsection{Proof of Theorem~\ref{thm:main}}
\begin{proof}[Proof of Theorem~\ref{thm:main}]
Without loss of generality, we first move the planes orthogonal to $e_1$. For $\lambda\in(0,1)$, we use the notation $\Sigma_\lambda$, $T_\lambda$ and $x^\lambda$ defined in Section~2.1. Also, set
\[
u_\lambda(x):=u(x^\lambda),\qquad w_\lambda:=u-u_\lambda,\qquad \varepsilon:=\mathcal D(k).
\]
%If $\varepsilon=0$, then $k$ is radial and nonincreasing. The qualitativesymmetry result discussed in the Introduction gives that $u$ is radial andnonincreasing, hence $\mathcal D(u)=0$ and there is nothing to prove.
If $\varepsilon=0$, then $k$ is radial and nonincreasing, we can obtain $u$ is radial and nonincreasing by \cite[Theorem 1.1]{Zhang15}, hence $\mathcal D(u)=0$. We may therefore assume $\varepsilon>0$. There is also no loss in imposing
\[
0<\varepsilon\le\varepsilon_0\le1,
\]
where $\varepsilon_0>0$ to be determined later.
Indeed, $\mathcal D(u)\le2C_0$, so the case $\varepsilon\ge\varepsilon_0$ is absorbed by enlarging the final constant in \eqref{eq:mainest}.

Define
\begin{equation}\label{eq:clambda}
c_\lambda(x):=
\begin{cases}
-k(x)\dfrac{g(u(x))-g(u_\lambda(x))}{u(x)-u_\lambda(x)},&u(x)\ne u_\lambda(x),\\[8pt]
0,&u(x)=u_\lambda(x).
\end{cases}
\end{equation}
Then
\begin{equation}\label{eq:cBound}
\|c_\lambda\|_{L^\infty(\Sigma_\lambda)}\le \|k\|_{L^\infty(B_1)}\|g\|_{C^{0,1}([0,C_0])}=:M_0.
\end{equation}
For all $x\in\Sigma_\lambda$, we have
\begin{equation}\label{eq:u_lambda}
    (-\Delta)^{s}u_{\lambda}(x)=k_{\lambda}(x)g(u_{\lambda}(x)),
\end{equation}
where $k_\lambda(x):=k(x^\lambda)$.

Combining \eqref{eq:main} and \eqref{eq:u_lambda}, for all $x\in\Sigma_\lambda$, we get
\begin{equation}\label{eq:wEq}
(-\Delta)^sw_\lambda+c_\lambda(x)w_\lambda=(k-k_\lambda)g(u_\lambda)\qquad\text{in }\Sigma_\lambda.
\end{equation}
Let $r=|x|$, $\rho=|x^\lambda|$. If $\rho>0$, write $e=x/r$ and $e_\lambda=x^\lambda/\rho$. Since $|x^\lambda|\le|x|$, we have
\[
k(x)-k(x^\lambda)=\bigl(k(re)-k(\rho e)\bigr)+\bigl(k(\rho e)-k(\rho e_\lambda)\bigr).
\]
Hence,
\begin{equation}\label{eq:kdiff}
(k(x)-k(x^\lambda))_+\le \bigl(k(re)-k(\rho e)\bigr)_++\osc_{\partial B_\rho}k\le\varepsilon.
\end{equation}
The case $\rho=0$ follows by continuity. Since $g\ge0$ and $0\le u_\lambda\le C_0$,
\begin{equation}\label{eq:forcing}
\|((k-k_\lambda)g(u_\lambda))_+\|_{L^\infty(\Sigma_\lambda)}\le C\varepsilon.
\end{equation}

For $\mu$ close to $1$, the cap $\Sigma_\mu$ lies in the strip $\mu<x_1<1$ of width $1-\mu$. The function $w_\mu$ is antisymmetric with respect to $T_\mu$, and in the right half-space outside $\Sigma_\mu$ we have $w_\mu\le0$, since $u=0$ outside $B_1$ and $u\ge0$ in $B_1$. Let $\lambda_1$ be fixed such that $\lambda_1\ge \max\{1-\delta_0,1/2\}$, where $\delta_0$ is the constant determined in Proposition~\ref{prop:maxprinciples}. By Proposition~\ref{prop:maxprinciples}(i), \eqref{eq:cBound} and \eqref{eq:forcing}, we obtain
\[
\|w_\mu^+\|_{L^\infty(\Sigma_\mu)}\le C^{*}_{1}\varepsilon
\]
for every $\mu\in[\lambda_1,1)$.

Define
\begin{equation}\label{eq:Lambda}
\Lambda:=\left\{\lambda\in(0,1):\|w_\mu^+\|_{L^\infty(\Sigma_\mu)}\le A\varepsilon\ \text{for every }\mu\in[\lambda,1)\right\},
\end{equation}
where $A=\max\{C_1^*,C_2^*\}$. Here, $C_2^*$ is used in \eqref{eq:A-2}.

Then
\[
[\lambda_1,1)\subset\Lambda.
\]

%\smallskip
%\noindent\emph{Step I: $\Lambda\ne\varnothing$.}
Set
\begin{equation}\label{eq:lstar}
\lambda_*:=\inf\Lambda.
\end{equation}
Because $\Lambda$ is closed upward, the estimate in \eqref{eq:Lambda} holds for every $\mu>\lambda_*$. Letting $\mu\downarrow\lambda_*$ and using the global continuity of $u$ gives
\begin{equation}\label{eq:atstar}
w_{\lambda_*}\le A\varepsilon\qquad\text{in }\Sigma_{\lambda_*}.
\end{equation}

\smallskip
\noindent\emph{Step I: $\lambda_*\le1/4$.}
Assume for the contradiction that
\begin{equation}\label{eq:starbig}
\frac14<\lambda_*\le\lambda_1.
\end{equation}
Define
\begin{equation}\label{eq:vstar}
v:=u_{\lambda_*}-u+A\varepsilon.
\end{equation}
By \eqref{eq:atstar}, $v\ge0$ in $\Sigma_{\lambda_*}$. If $x\in H_{\lambda_*}^+\backslash B_1$, then $u(x)=0$ and $u(x^{\lambda_*})\ge0$, so $v(x)\ge A\varepsilon$. Thus,
\begin{equation}\label{eq:vhalf}
v\ge0\qquad\text{in }H_{\lambda_*}^+.
\end{equation}
Since $u_{\lambda_*}-u$ is antisymmetric, for all $x\in H_{\lambda_*}^{+}$, we have
\begin{equation}\label{eq:vshift}
v(x^{\lambda_*})=2A\varepsilon-v(x).
\end{equation}
From \eqref{eq:wEq}, using $w_{\lambda_*}=A\varepsilon-v$, we obtain
\begin{align*}
(-\Delta)^sv+c_{\lambda_*}^+v
&=(k_{\lambda_*}-k)g(u_{\lambda_*})+A\varepsilon c_{\lambda_*}+c_{\lambda_*}^-v\\
&\ge -(k-k_{\lambda_*})_+g(u_{\lambda_*})-A\varepsilon c_{\lambda_*}^-\\
&\ge-C\varepsilon\qquad\text{in }\Sigma_{\lambda_*},
\end{align*}
where the last inequality follows from \eqref{eq:cBound} and \eqref{eq:kdiff}.

Apply Lemma~\ref{lem:lower} to $q(x)=k(x)g(u(x))$. Here $q\ge0$ and $\|q\|_\infty\le\|k\|_\infty\|g\|_{L^\infty([0,C_0])}$, so the lemma gives $c_*>0$ such that  for all $x\in B_1$,
\begin{equation}\label{eq:lowerMain}
u(x)\ge c_*(1-|x|)^s.
\end{equation}
Proposition~\ref{prop:boundaryreg} gives that there exists a positive constant $C_H$ such that for all $x\in B_1$,
\begin{equation}\label{eq:upperMain}
u(x)\le C_H(1-|x|)^s,\qquad \|u\|_{C^{s}(\R^{n})}\le C_{H}
\end{equation}

Choose a fixed $\eta>0$, depending only on the data, such that
\begin{equation}\label{eq:etaFixed}
\eta<\min\left\{\frac{1-\lambda_1}{6},\frac1{20},\frac{m_{0}}{2C_{n}}\right\}, \qquad C_H\left(\frac{5\eta}{2}\right)^s \le\frac{c_*}{2}\left(\frac{1-\lambda_1}{4}\right)^s,
\end{equation}
where $C_n$ is the geometric constant used in \eqref{eq:Dmeasure} and $m_{0}$ is the constant determined in Proposition \ref{prop:maxprinciples}. Let
\[
z_\eta:=(1-2\eta)e_1.
\]
Since $1-2\eta-\lambda_*\ge1-2\eta-\lambda_1>\eta$ and $1-|z_\eta|=2\eta$, we have $z_\eta\in\Sigma_{\lambda_*,\eta}$. If $x\in B_{\eta/2}(z_\eta)$, then $x_1\ge1-5\eta/2$, and
\begin{align*}
1-|x^{\lambda_*}|^2=1-|x|^2+4\lambda_*(x_1-\lambda_*)\ge4\lambda_*\left(1-\frac{5\eta}{2}-\lambda_*\right)\ge\frac{1-\lambda_1}{2},
\end{align*}
where we used $\lambda_*\ge1/4$, $\lambda_*\le\lambda_1$, and $5\eta/2<(1-\lambda_1)/2$. Hence,
\begin{equation}\label{eq:refdistfixed}
1-|x^{\lambda_*}|\ge\frac{1-\lambda_1}{4}.
\end{equation}
Also $1-|x|\le5\eta/2$. By (\ref{eq:lowerMain})-(\ref{eq:refdistfixed}), for all $x\in B_{\eta/2}(z_\eta)$,
\begin{equation}\label{eq:positiveballfixed}
u(x^{\lambda_*})-u(x)\ge m_1:=\frac{c_*}{2}\left(\frac{1-\lambda_1}{4}\right)^s>0.
\end{equation}
Consequently,
\[
\fint_{B_{\eta/2}(z_\eta)}v\,dx\ge m_1.
\]
Theorem~\ref{thm:propagation} with $\delta=\eta$ yields
\[
m_1\le C\eta^{-\beta}\left(\inf_{\Sigma_{\lambda_*,\eta}}v+C\varepsilon\right).
\]
Since $v=(u_{\lambda_*}-u)+A\varepsilon$, there exist constants $c_2,C_2>0$, depending only on the data, such that
\begin{equation}\label{eq:corefixed}
\inf_{\Sigma_{\lambda_*,\eta}}(u_{\lambda_*}-u)\ge c_2\eta^\beta-C_2\varepsilon\geq\frac{c_2}{2}\eta^\beta:=q_{0},
\end{equation}
where we take $\epsilon_{0}$ to satisfy
\begin{equation}\label{eq:epsilon0}
    C_2\varepsilon_{0}\le\frac{c_2}{2}\eta^\beta.
\end{equation}

Choose $\sigma_0>0$ so that
\begin{equation}\label{eq:sigmafixed}
C_H(2\sigma_0)^s\le\frac{q_0}{2},\qquad\sigma_0<\min\{\eta,\lambda_*/2\}.
\end{equation}
For $x\in\Sigma_{\lambda_*,\eta}$ and $\mu\in[\lambda_*-\sigma_0,\lambda_*]$,
\[
|x^\mu-x^{\lambda_*}|=2|\mu-\lambda_*|.
\]
Hence \eqref{eq:upperMain}, \eqref{eq:corefixed} and \eqref{eq:sigmafixed} give
\begin{equation}\label{eq:coremu}
u_\mu-u\ge (u_{\lambda_*}-u)-C_H(2\sigma_0)^s\ge\frac{q_0}{2}\qquad\text{in }\Sigma_{\lambda_*,\eta}.
\end{equation}
Set
\[
D_\mu:=\Sigma_\mu\setminus\overline{\Sigma_{\lambda_*,\eta}}.
\]
If $x\in D_\mu$, then either its distance from $T_{\lambda_*}$ is at most $\eta$ (which gives $\mu<x_1\le\lambda_*+\eta$), or its distance from the spherical part of $\partial\Sigma_{\lambda_*}$ is at most $\eta$ (which implies $1-|x|\le\eta$). Hence,
\begin{equation}\label{eq:Dmeasure}
|D_\mu|\le C_n(\eta+\sigma_0)\le 2C_{n}\eta\le m_{0}.
\end{equation}
By \eqref{eq:wEq}, \eqref{eq:Dmeasure}, $w_{\mu}(x)=-w_{\mu}(x^\mu)$, and Proposition~\ref{prop:maxprinciples}(ii), we obtain for all $\lambda_*-\sigma_0\le\mu\le\lambda_*$,
\begin{equation}\label{eq:A-2}
    \|w_\mu^+\|_{L^\infty(D_\mu)}\le C_2^*\varepsilon\le A\varepsilon.
\end{equation}
Combining \eqref{eq:coremu} and \eqref{eq:A-2}, we get
\begin{equation}
    \|w_\mu^+\|_{L^\infty(\Sigma_\mu)}\le A\varepsilon.
\end{equation}
Together with the definition of $\lambda_*$ for $\mu>\lambda_*$, this implies $\lambda_*-\sigma_0\in\Lambda$, contradicting \eqref{eq:lstar}. Hence,
\begin{equation}\label{eq:starquarter}
\lambda_*\le\frac14.
\end{equation}

\smallskip
\noindent\emph{Step II: quantitative bound for $\lambda_*$.}
If $\lambda_*=0$, there is nothing to prove. Assume $\lambda_*>0$. Let $\beta$ be the exponent in Theorem~\ref{thm:propagation}, used with $\lambda_0=\lambda_1$. We prove
\begin{equation}\label{eq:starquant}
\lambda_*\le C\varepsilon^{1/(s+\beta)}.
\end{equation}
Choose $\theta_0>0$, depending only on the data, such that
\begin{equation}\label{eq:theta}
\theta_0\le\min\left\{\frac{1-\lambda_1}{6},\left(\frac{c_*}{2C_H(5/2)^s}\right)^{1/s}, \frac{2m_{0}}{C_{n}}\right\},
\end{equation}
where $C_{n}$ is used in \eqref{eq:Pmu}. Put
\begin{equation}\label{eq:etaScale}
\eta:=\theta_0\lambda_*.
\end{equation}
By \eqref{eq:starquarter}, $\lambda_*\le1/4$. Recall $z_\eta=(1-2\eta)e_1$. Since
\[
1-2\eta-\lambda_*\ge1-2\times\frac16\times\frac14-\frac14=\frac23>\eta,
\]
so $z_\eta\in\Sigma_{\lambda_*,\eta}$. If $x\in B_{\eta/2}(z_\eta)$, then
\begin{equation}\label{eq:z-eta}
    x_1\ge1-\frac{5\eta}{2}, \qquad 1-|x|\le\frac{5\eta}{2}.
\end{equation}
Since $\eta=\theta_0\lambda_*$, $\theta_0\le1/4$, and $\lambda_*\le1/4$,
\[
x_1-\lambda_*\ge1-\left(1+\frac{5\theta_0}{2}\right)\lambda_*\ge1-\frac{13}{8}\cdot\frac14=\frac{19}{32}>\frac12.
\]
Hence,
\[
2(1-|x^{\lambda_*}|)\geq(1-|x^{\lambda_*}|)(1+|x^{\lambda_*}|)=1-|x^{\lambda_*}|^2=1-|x|^2+4\lambda_*(x_1-\lambda_*)\ge2\lambda_*.
\]
Therefore,
\begin{equation}\label{eq:refdistscale}
1-|x^{\lambda_*}|\ge\lambda_*.
\end{equation}
At the same time, $1-|x|\le5\eta/2$. By \eqref{eq:lowerMain}, \eqref{eq:upperMain}, \eqref{eq:theta}, \eqref{eq:etaScale} and \eqref{eq:refdistscale}, for all $x\in B_{\eta/2}(z_\eta)$,
\begin{equation}\label{eq:positiveballscale}
u(x^{\lambda_*})-u(x)\ge c\lambda_*^s.
\end{equation}

Applying Theorem~\ref{thm:propagation} to the function $v$ in \eqref{eq:vstar} and using \eqref{eq:positiveballscale}, we obtain
\begin{equation}\label{eq:coreScale}
\inf_{\Sigma_{\lambda_*,\eta}}(u_{\lambda_*}-u)\ge c\eta^\beta\lambda_*^s-C\varepsilon\ge c_{3}\lambda_*^{s+\beta}-C_{3}\varepsilon.
\end{equation}
Suppose, contrary to the desired estimate, that
\begin{equation}\label{eq:contrscale}
c_{3}\lambda_*^{s+\beta}>2C_{3}\varepsilon.
\end{equation}
Then \eqref{eq:coreScale} yields
\begin{equation}\label{eq:qstar}
\inf_{\Sigma_{\lambda_*,\eta}}(u_{\lambda_*}-u)\ge q_*:=c\lambda_*^{s+\beta}>0.
\end{equation}
Choose $\sigma_0>0$ such that
\begin{equation}\label{eq:sigma00}
C_H(2\sigma_0)^s\le\frac{q_*}{2},\qquad\sigma_0<\min\{\eta,\lambda_*/2\}.
\end{equation}
For $x\in\Sigma_{\lambda_*,\eta}$ and $\mu\in[\lambda_*-\sigma_0,\lambda_*]$, the global $C^s$ estimate gives
\begin{equation}\label{eq:u-second}
    u_\mu(x)-u(x)\ge u_{\lambda_*}(x)-u(x)-C_H(2\sigma_0)^s\ge\frac{q_*}{2}.
\end{equation}
By \eqref{eq:Dmeasure}, \eqref{eq:starquarter}, \eqref{eq:etaScale}, and \eqref{eq:sigma00},
\begin{equation}\label{eq:Pmu}
|D_\mu|\le2C_n\eta=2C_n\theta_0\lambda_*\le\frac{C_n\theta_0}{2}\le m_{0}.
\end{equation}
Applying Proposition~\ref{prop:maxprinciples}(ii) to $w_\mu$ on $D_\mu$, with \eqref{eq:u-second}, gives for all $\lambda_*-\sigma_0\le\mu\le\lambda_*$,
\begin{equation}\label{A-3}
 \|w_\mu^+\|_{L^\infty(\Sigma_\mu)}\le C_2^*\varepsilon\le A\varepsilon.   
\end{equation}
Together with the defining estimate for every $\mu>\lambda_*$, this yields $\lambda_*-\sigma_0\in\Lambda$, contradicting \eqref{eq:lstar}.
Thus, \eqref{eq:contrscale} is impossible, and \eqref{eq:starquant} follows.

\smallskip
\noindent\emph{Step III: reflection through the origin and radial monotonicity.}
Set
\begin{equation}\label{eq:lambda2}
\lambda_2:=C\varepsilon^{1/(s+\beta)},
\end{equation}
with $C$ large enough that $\lambda_*\le\lambda_2$. If $\lambda_2\ge1$, the desired estimate is already absorbed by the uniform bound on $u$, so we may assume $\lambda_2<1$. By the definition of $\lambda_*$ and the continuity in the plane parameter,
\begin{equation}\label{eq:wlambda2}
\|(u-u_{\lambda_2})_+\|_{L^\infty(\Sigma_{\lambda_2})}\le A\varepsilon.
\end{equation}
If $x=(x_1,x')\in B_1$ and $x_1>\lambda_2$, then
\begin{align*}
u(x_1,x')-u(-x_1,x')={}&\bigl[u(x_1,x')-u(2\lambda_2-x_1,x')\bigr]+\bigl[u(2\lambda_2-x_1,x')-u(-x_1,x')\bigr]\\
\le{}&A\varepsilon+C_H(2\lambda_2)^s\le{}C\varepsilon^{s/(s+\beta)}.
\end{align*}
If $0<x_1\le\lambda_2$, the global $C^s$ estimate gives directly
\[
u(x_1,x')-u(-x_1,x')\le C_H(2x_1)^s\le C\lambda_2^s\le C\varepsilon^{s/(s+\beta)}.
\]
Running the moving plane from the opposite side gives the reverse inequality. Consequently,
\begin{equation}\label{eq:originreflection}
|u(x_1,x')-u(-x_1,x')|\le C\varepsilon^\gamma,\qquad\gamma:=\frac{s}{s+\beta}\in(0,1),
\end{equation}
The argument is rotation invariant, so \eqref{eq:originreflection} holds for reflection across every hyperplane through the origin. If $|x|=|y|$, there exists a reflection sending $x$ to $y$. Hence,
\begin{equation}\label{eq:sphosc}
\sup_{0<r<1}\osc_{\partial B_r}u\le C\varepsilon^\gamma.
\end{equation}

It remains to control the radial monotonicity. Fix $e\in\Sn$ and $0\le\rho<r<1$, and move the planes orthogonal to $e$. Let
\[\lambda:=\frac{r+\rho}{2}.\]
The reflection of $re$ across $\{x\cdot e=\lambda\}$ is $\rho e$.

If $\lambda\ge\lambda_2$, then $\lambda\ge\lambda_*(e)$. Since $\lambda\in\Lambda$ and $0\le\varepsilon\le1$, we have
\begin{equation}\label{m-1}
    u(re)-u(\rho e)\le A\varepsilon\le A\epsilon^{\gamma}.
\end{equation}
If $\lambda<\lambda_2$, then $r<2\lambda_2$ and
\begin{equation}\label{m-2}
    u(re)-u(\rho e)\le C_H(r-\rho)^s\le C\lambda_2^s\le C\varepsilon^\gamma.
\end{equation}
Combining \eqref{m-1} and \eqref{m-2}, we conclude
\begin{equation}\label{eq:radmono}
\sup_{e\in\Sn}\sup_{0\le\rho<r<1}\bigl(u(re)-u(\rho e)\bigr)_+\le C\varepsilon^\gamma.
\end{equation}
Combining \eqref{eq:sphosc} and \eqref{eq:radmono} proves \eqref{eq:mainest}.
\end{proof}
\begin{remark}
    Combining \eqref{eq:beta-effective} and \eqref{eq:originreflection}, we can get $\gamma$ is explicit.
\end{remark}
\subsection{Proof of Corollary~\ref{cor:firstorder}}
\begin{proof}[Proof of Corollary~\ref{cor:firstorder}]
For any two points $x,y\in \partial B_r$, we have
\[ |k(x)-k(y)|\le  \pi r\,\|\nabla_Tk\|_{L^\infty(B_1)}\le \pi\|\nabla_Tk\|_{L^\infty(B_1)}.\]
Thus, we obtain
\begin{equation}\label{eq:k-1}
    \osc_{\partial B_r}k \le\pi\|\nabla_Tk\|_{L^\infty(B_1)}.
\end{equation}
Moreover, for $e\in\Sn$ and $0\le\rho<r<1$,
\begin{equation}\label{eq:k-2}
    k(re)-k(\rho e)=\int_\rho^r\partial_rk(te)\,dt\le(r-\rho)\|(\partial_rk)_+\|_{L^\infty(B_1)}.
\end{equation}
Hence, combining \eqref{eq:k-1} and \eqref{eq:k-2}, we get $\mathcal D(k)\le\pi\mathcal D_1(k)$.

Finally, by Theorem \ref{thm:main}, we have
\[\mathcal D(u)\le C\mathcal D(k)^{\gamma}\le C\mathcal D_1(k)^{\gamma}.\]
\end{proof}
\subsection{Proof of Theorem~\ref{thm:signchanging}}
\begin{proof}[Proof of Theorem~\ref{thm:signchanging}]
The main proof is the same as that of Theorem~\ref{thm:main}. Hence, we only emphasize the differences. We use the same notation as in the proof of Theorem~\ref{thm:main}.

Set
\[\varepsilon:=\osc_{B_1}k.\]
If $\varepsilon=0$, then $k$ is constant. The qualitative symmetry theorem \cite[Theorem~1.1]{JW16}, applied to the spatially independent nonlinearity $k\,g(u)$, gives $\mathcal D(u)=0$. Hence, we assume $\varepsilon>0$ below. As in the proof of Theorem~\ref{thm:main}, defects above a fixed threshold are absorbed by the uniform bound on $u$, so only the small-defect regime needs to be considered.

First, to show that $\Lambda\neq\emptyset$, we just replace \eqref{eq:kdiff} with
\[
|k(x)-k(x^\lambda)|\le\varepsilon.
\]

Next, we verify the fixed-range step $\lambda_*\le1/4$. Set
\[
d_*:=\frac{1-\lambda_1}{4},\qquad m_*:=\min_{t\in[d_*,1]}F(t)>0.
\]
Choose a fixed $\eta>0$ such that
\begin{equation}\label{eq:eta-2}
    \eta<\min\left\{\frac{1-\lambda_1}{6},\frac1{20},\frac{m_0}{2C_{n}}\right\},\qquad C_H(5\eta/2)^s\le \frac{m_*}{2},
\end{equation}
where $C_n$ denotes the geometric constant in \eqref{eq:Dmeasure}. 

For all $x\in B_{\eta/2}(z_\eta)$, since $1-|x|\le5\eta/2$, it follows from \eqref{eq:Fbound}, \eqref{eq:upperMain}, \eqref{eq:refdistfixed} and \eqref{eq:eta-2} that
\begin{equation}\label{udiff-2}
    u_{\lambda_*}-u\ge F(1-|x^{\lambda_*}|)-C_{H}(1-|x|)^{s} \geq m^{*}-C_{H}\left(\frac52\eta\right)^{s}\geq m_*/2.
\end{equation}
Replacing \eqref{eq:positiveballfixed} with \eqref{udiff-2}, we can obtain \eqref{eq:starquarter}.

Finally, it remains to quantify how close $\lambda_*$ is to the origin. Define the nondecreasing lower envelope
\begin{equation}\label{eq:Flower}
\underline F(r):=\min_{t\in[r,1]}F(t),\qquad0<r\le1.
\end{equation}
Then $\underline F(r)>0$ for $r>0$. Fix $\theta_0>0$, depending only on the data, such that
\begin{equation}\label{eq:theta-1.3}
\theta_0\le\min\left\{\frac{1-\lambda_1}{6}, \frac{2m_{0}}{C_{n}}\right\},
\end{equation}
where $C_{n}$ is used in \eqref{eq:Pmu}. Set
\begin{equation}\label{eq:etaF}
\eta(r):=\min\left\{\theta_0r,\left(\frac{\underline F(r)}{2C_H(5/2)^s}\right)^{1/s}\right\},
\qquad0<r\le\frac14,
\end{equation}
and
\begin{equation}\label{eq:PsiF}
\Psi(r):=\eta(r)^\beta\underline F(r).
\end{equation}
The functions $\eta$ and $\Psi$ are nondecreasing, $\Psi(r)>0$ for $r>0$,and $\Psi(r)\to0$ as $r\downarrow0$.

Assume $\lambda_*>0$ and choose $\eta=\eta(\lambda_*)$. For $x\in B_{\eta/2}(z_{\eta})$, it follows from \eqref{eq:Fbound}, \eqref{eq:z-eta} and \eqref{eq:etaF} that
\[
u(x^{\lambda_*})\ge\underline F(\lambda_*),\qquad u(x)\le\frac12\underline F(\lambda_*),
\]
and hence
\begin{equation}\label{eq:SCball}
\fint_{B_{\eta/2}(z_\eta)}(u_{\lambda_*}-u)\,dx\ge\frac12\underline F(\lambda_*).
\end{equation}
Theorem~\ref{thm:propagation}, applied to $v=u_{\lambda_*}-u+A\varepsilon$, now yields
\begin{equation}\label{eq:SCcore}
\inf_{\Sigma_{\lambda_*,\eta}}(u_{\lambda_*}-u)\ge c\Psi(\lambda_*)-C\varepsilon.
\end{equation}
Suppose, to the contrary, that $c\Psi(\lambda_*)\geq2C\varepsilon$, with
the constants in \eqref{eq:SCcore}. Then
\[
\inf_{\Sigma_{\lambda_*,\eta}}(u_{\lambda_*}-u)\ge c_0\Psi(\lambda_*)>0
\]
for a fixed $c_0>0$. Choose $\sigma_0>0$ so that
\[
C_H(2\sigma_0)^s\le\frac{c_0}{2}\Psi(\lambda_*),\qquad\sigma_0<\min\{\eta,\lambda_*/2\}.
\]
Thus, for all $\lambda_*-\sigma_0\le\mu\le\lambda_*$, \eqref{A-3} holds and contradicts the minimality of $\lambda_*$. We conclude that
\begin{equation}\label{eq:PsiBound}
\Psi(\lambda_*)\le C\varepsilon.
\end{equation}

For sufficiently small $t\ge0$, define
\begin{equation}\label{eq:rF}
r_F(t):=\sup\left\{r\in(0,1/4]:\Psi(r)\le Ct\right\},
\end{equation}
with $r_F(t)=0$ if the set is empty. Since $\Psi(r)>0$ for each fixed $r>0$, $r_F(t)\to0$ as $t\downarrow0$, and \eqref{eq:PsiBound} gives $\lambda_*\le r_F(\varepsilon)$. 

We now spell out the last comparison. Fix a direction and write $r_\varepsilon:=r_F(\varepsilon)$. At the plane $\lambda=r_\varepsilon$, the definition of $\Lambda$ gives
\begin{equation}\label{eq:u-22}
    \|(u-u_{r_\varepsilon})_+\|_{L^\infty(\Sigma_{r_\varepsilon})}\le A\varepsilon.
\end{equation}
If $x_1>r_\varepsilon$, combining \eqref{eq:upperMain} and \eqref{eq:u-22}, we obtain
\begin{equation}
\begin{aligned}
u(x_1,x')-u(-x_1,x')={}&\bigl[u(x_1,x')-u(2r_\varepsilon-x_1,x')\bigr]\\
&+\bigl[u(2r_\varepsilon-x_1,x')-u(-x_1,x')\bigr]\\
\leq{} &A\varepsilon+C_H(2r_\varepsilon)^s.
\end{aligned}
\end{equation}
If $0<x_1\le r_\varepsilon$, \eqref{eq:upperMain} gives
\begin{equation}
    u(x_1,x')-u(-x_1,x')\leq C_H(2r_\varepsilon)^s.
\end{equation}
Similar with \eqref{m-1} and \eqref{m-2}, we can obtain that
\begin{equation}\label{eq:SCfinalraw}
\mathcal D(u)\le C\left(\varepsilon+r_F(\varepsilon)^s\right):=C\omega_F(\epsilon).
\end{equation}
\end{proof}

\section{Consequences, sharpness, and extensions}\label{sec:extensions}

\subsection{Two direct consequences}
Before discussing extensions, we record two consequences that clarify the meaning of the defect estimate.

\begin{corollary}\label{cor:radialprofile}
Under the assumptions of Theorem~\ref{thm:main}, there exists a bounded nonincreasing function $U:[0,1]\to\R$ such that
\begin{equation}\label{eq:profileclose}
\|u-U(|\cdot|)\|_{L^\infty(B_1)}\le C\mathcal D(k)^\gamma.
\end{equation}
Moreover
\begin{equation}\label{eq:centermax}
0\le\max_{\overline{B_1}}u-u(0)\le C\mathcal D(k)^\gamma.
\end{equation}
\end{corollary}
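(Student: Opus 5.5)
Both estimates follow from Theorem~\ref{thm:main} once we build a suitable profile. Write $\varepsilon:=\mathcal D(k)$ and $\tau:=C\varepsilon^\gamma\ge\mathcal D(u)$. The plan is to take as profile the radial supremum made monotone:
\[
U(r):=\sup\{u(y):\ |y|\ge r,\ y\in\overline{B_1}\},\qquad 0\le r\le 1 .
\]
This $U$ is nonincreasing by construction. It is bounded by $\|u\|_{L^\infty}\le C_0$, and we use the continuous representative of $u$ on $\overline{B_1}$ given by Proposition~\ref{prop:boundaryreg}. Moreover $U(1)=0$ because $u=0$ on $\partial B_1$.

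We compare $u$ with $U(|\cdot|)$. Fix $x\in B_1$ with $|x|=r$. Since $x$ is admissible in the supremum, $u(x)\le U(r)$. For the reverse bound, take any $y$ with $\rho:=|y|\ge r$ and set $e:=y/|y|$ (the case $y=0$ forces $r=0$ and is trivial).

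\emph{Case $\rho<1$.} The radial term of $\mathcal D(u)$, applied to the pair $r\le\rho$ along the ray $e$, gives $u(y)=u(\rho e)\le u(re)+\tau$. The spherical term gives $u(re)\le u(x)+\tau$, since $re$ and $x$ lie on $\partial B_r$.

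\emph{Case $\rho=1$.} Here $u(y)=0\le u(x)$.

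Hence $U(r)\le u(x)+2\tau$, so $\|u-U(|\cdot|)\|_{L^\infty(B_1)}\le 2\tau$, which is \eqref{eq:profileclose} after renaming $C$. The case $r=0$ is included, since then $re=0=x$.

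For \eqref{eq:centermax}, the lower bound $0\le\max_{\overline{B_1}}u-u(0)$ is trivial. For the upper bound, take any $x=re$ with $0<r<1$ and apply the radial monotonicity term with $\rho=0<r$: this gives $u(re)-u(0)\le\mathcal D(u)\le\tau$. Points on $\partial B_1$ give $u=0\le u(0)$, since $u>0$ in $B_1$. Taking the maximum over $\overline{B_1}$ yields the claim.

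The only delicate point is bookkeeping at the boundary and at the origin, where $\mathcal D$ is defined only for $0\le\rho<r<1$ and $r\in(0,1)$. We handle it either by including the boundary explicitly, as above, or by continuity of $u$, which makes suprema over $B_1$ and $\overline{B_1}$ coincide. No new analytic input beyond Theorem~\ref{thm:main} is needed.
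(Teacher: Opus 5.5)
Your proof is correct and follows essentially the paper's route: build a nonincreasing upper envelope, then use the spherical-oscillation and radial-increase parts of $\mathcal D(u)$ to get $|u-U(|\cdot|)|\le 2\mathcal D(u)$, and use rays from the origin ($\rho=0$) for the estimate on $\max_{\overline{B_1}}u-u(0)$. The only difference is that the paper takes the envelope $\sup_{t\ge r}m(t)$ of the spherical means $m(t)=\fint_{\partial B_t}u\,dS$, whereas you take the envelope of $u$ itself over $\{|y|\ge r\}$; this skips the averaging step and gives the same constant $2$.
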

\begin{proof}
Let $\eta:=\mathcal D(u)$ and for $0<r<1,$ we define 
\[
m(r):=\fint_{\partial B_r}u\,dS\quad,
\]
with $m(0):=u(0)$. For all $0\le r<t<1$ and every $e\in\Sn$, we have  $|u(re)-m(r)|\le\eta$, $u(te)-u(re)\le\eta$ and $m(t)-m(r)\le\eta$. Since $u\in C^s(\R^n)$ and $u=0$ on $\partial B_1$, we have $m(r)\to0$ as $r\uparrow1$. Define
\[
U(r):=\sup_{r\le t<1}m(t)\quad(0\le r<1),
\]
with $U(1):=0$. Then $U$ is nonincreasing and $0\le U(r)-m(r)\le\eta$. Hence
\begin{equation}\label{u-sym}
|u(re)-U(r)|\le2\eta.
\end{equation}
Combining \eqref{u-sym} and Theorem~\ref{thm:main},  we get  \eqref{eq:profileclose}. Since $u(x)\le u(0)+\eta$ for every $x\in B_1$, we finally obtain \eqref{eq:centermax}.
\end{proof}

\begin{corollary}\label{cor:families}
Fix $n\ge2$, $0<s<1$, $C_0\ge1$, and a nonnegative $g\in C^{0,1}_{\mathrm{loc}}([0,\infty))$. Let $k_j\in C(B_1;[0,\infty))\cap L^\infty(B_1)$ satisfy
\[
\sup_j\|k_j\|_{L^\infty(B_1)}<\infty,\qquad\mathcal D(k_j)\longrightarrow0.
\]
For each $j$, let $u_j$ be a classical solution of
\[
(-\Delta)^su_j=k_j(x)g(u_j)\quad\text{in }B_1,\qquad u_j=0\quad\text{in }\R^n\setminus B_1,
\]
satisfying
\[
C_0^{-1}\le\|u_j\|_{L^\infty(B_1)}\le C_0.
\]
Then
\begin{equation}\label{eq:familydefect}
\mathcal D(u_j)\le C\mathcal D(k_j)^\gamma\longrightarrow0,
\end{equation}
with $C$ and $\gamma$ independent of $j$. Moreover, $(u_j)$ is precompact in
$C(\overline{B_1})$, and every uniform limit is radial and nonincreasing in
$|x|$.
\end{corollary}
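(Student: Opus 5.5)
The estimate \eqref{eq:familydefect} follows from Theorem~\ref{thm:main} once we observe that its constants are uniform in $j$. The constants $C$ and $\gamma$ there depend only on $n,s,C_0$, $\|g\|_{C^{0,1}([0,C_0])}$ and $\|k\|_{L^\infty(B_1)}$. Since $\|k\|_{L^\infty}$ enters only through upper bounds, such as $M_0$ in \eqref{eq:cBound} and the bound on $\|q\|_\infty$ in Lemma~\ref{lem:lower}, we may replace $\|k_j\|_{L^\infty}$ by $K:=\sup_j\|k_j\|_{L^\infty(B_1)}$ throughout. This gives $j$-independent $C,\gamma$, and hence $\mathcal D(u_j)\le C\mathcal D(k_j)^\gamma\to0$.

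For precompactness in $C(\overline{B_1})$ I will use Arzel\`a--Ascoli. Set $q_j:=k_j g(u_j)$. Since $0\le u_j\le C_0$, we have $\|q_j\|_{L^\infty(B_1)}\le K\|g\|_{L^\infty([0,C_0])}$, a bound uniform in $j$. Proposition~\ref{prop:boundaryreg} with $\Omega=B_1$ then gives $\|u_j\|_{C^s(\R^n)}\le C$ uniformly in $j$. So the family is uniformly bounded and equicontinuous on $\overline{B_1}$, and every subsequence has a uniformly convergent subsequence.

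Now let $u_{j_k}\to u_\infty$ uniformly on $\overline{B_1}$. The defect $\mathcal D$ is $2$-Lipschitz with respect to the sup norm: each oscillation and each positive part of a difference changes by at most $2\|v-w\|_\infty$. Hence
\[
\mathcal D(u_\infty)\le \mathcal D(u_{j_k})+4\|u_{j_k}-u_\infty\|_{L^\infty(B_1)}\to0 .
\]
By the characterization following \eqref{eq:raddef}, $\mathcal D(u_\infty)=0$ means $u_\infty$ is radial and nonincreasing in $|x|$. The limit is also nontrivial: $\|u_\infty\|_\infty\ge C_0^{-1}$ passes to the limit, although this is not needed for the statement.

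The only point needing care is the first step, namely checking that $\|k\|_{L^\infty}$ enters the constants of Theorem~\ref{thm:main} monotonically, so that a uniform bound suffices. I would verify this by tracing the proof, noting that $c_*$ in Lemma~\ref{lem:lower} and $C_H$ depend on $\|q\|_\infty$ only through upper bounds. Alternatively, the Lemma can be restated directly with $Q$ replaced by any upper bound for $\|q\|_\infty$.
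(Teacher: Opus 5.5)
Your proposal is correct and follows the paper's proof. Both use the uniformity of the constants in Theorem~\ref{thm:main}, a uniform $C^s$ bound from Proposition~\ref{prop:boundaryreg} with Arzel\`a--Ascoli, and passage to the limit in the defect; your bound $\mathcal D(u_\infty)\le\mathcal D(u_{j_k})+4\|u_{j_k}-u_\infty\|_{L^\infty(B_1)}$ is just a compact form of the paper's pointwise limits, and your check that $\|k\|_{L^\infty}$ enters the constants only through upper bounds makes explicit a point the paper leaves implicit.
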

\begin{remark}
    For Corollary \ref{cor:radialprofile} and \ref{cor:families}, similar results hold for nonseparable case.
\end{remark}

\begin{proof}
The uniform estimate \eqref{eq:familydefect} is Theorem~\ref{thm:main}, because all quantities on which its constants depend are bounded uniformly in $j$. Also
\[
\|k_jg(u_j)\|_{L^\infty(B_1)}\le\left(\sup_j\|k_j\|_{L^\infty(B_1)}\right)\|g\|_{L^\infty([0,C_0])}.
\]
Proposition~\ref{prop:boundaryreg} therefore gives a uniform $C^s(\R^n)$ bound. Arzel\`a--Ascoli yields precompactness in $C(\overline{B_1})$. If $u_{j_\ell}\to u_*$ uniformly, then for $|x|=|y|$,
\[
|u_*(x)-u_*(y)| =\lim_{\ell\to\infty}|u_{j_\ell}(x)-u_{j_\ell}(y)|=0,
\]
and, for $e\in\Sn$ and $0\le\rho<r<1$,
\[
u_*(re)-u_*(\rho e) =\lim_{\ell\to\infty} \bigl(u_{j_\ell}(re)-u_{j_\ell}(\rho e)\bigr) \le0.
\]
Thus $u_*$ is radial and nonincreasing.
\end{proof}

\subsection{Nonseparable nonlinearities}
From the proof for Theorem \ref{thm:main} and \ref{thm:signchanging}, the factorization $k(x)g(u)$ is not used at a structural level. Thus, the next result gives a nonseparable formulation and records the uniform spatial defect that replaces $\mathcal D(k)$. 
%Here, the supremum over $t$ is essential because the reflected equation evaluates the spatial difference at $t=u_\lambda(x)$. 
The local quantitative theory also uses uniform spatial deficits in the solution variable (see \cite[Theorem~1.5]{CCPP24}).

\begin{theorem}\label{thm:nonseparable}
Let $n\ge2$, $0<s<1$, and $C_0\ge1$. Suppose $f:B_1\times[0,C_0]\to[0,\infty)$ is bounded, continuous, and uniformly Lipschitz in its second variable, namely, for all $x\in B_1,\ 0\le t,\tau\le C_0$,
\begin{equation}\label{eq:fuLip}
|f(x,t)-f(x,\tau)|\le L_f|t-\tau|.
\end{equation}
Let $u$ be a classical solution of
\begin{equation}\label{eq:nonsep}
(-\Delta)^su=f(x,u)\quad\text{in }B_1, \qquad u=0\quad\text{in }\R^n\setminus B_1,
\end{equation}
such that
\[
\frac1{C_0}\le\|u\|_{L^\infty(B_1)}\le C_0.
\]
Then
\begin{equation}\label{eq:nonsepest}
\mathcal D(u) \le C\,\mathcal D_x(f;C_0)^\gamma,
\end{equation}
where $C>0$ and $\gamma\in(0,1)$ depend on $n,s,C_0,L_f$ and $\|f\|_{L^\infty(B_1\times[0,C_0])}$.
\end{theorem}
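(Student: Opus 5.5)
The plan is to rerun the proof of Theorem~\ref{thm:main} verbatim, with $k(x)g(u)$ replaced by $f(x,u)$. Only two ingredients depend on the separable structure: the linearization coefficient $c_\lambda$ and the one-sided bound on the forcing. Set $\varepsilon:=\mathcal D_x(f;C_0)$. As before, we may assume $0<\varepsilon\le\varepsilon_0$, since $\mathcal D(u)\le 2C_0$. In the case $\varepsilon=0$, each $f(\cdot,t)$ is radial and nonincreasing, and the qualitative result (for instance \cite[Theorem~1.1]{JW16}, or the exact symmetry obtained as the $\varepsilon\to0$ case of the argument below) gives $\mathcal D(u)=0$.

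For $\lambda\in(0,1)$ and $x\in\Sigma_\lambda$, write $w_\lambda=u-u_\lambda$. We have
\[
(-\Delta)^s w_\lambda=\bigl[f(x,u)-f(x,u_\lambda)\bigr]+\bigl[f(x,u_\lambda)-f(x^\lambda,u_\lambda)\bigr].
\]
Define $c_\lambda:=-\bigl(f(x,u)-f(x,u_\lambda)\bigr)/(u-u_\lambda)$ where $u\neq u_\lambda$, and $c_\lambda:=0$ otherwise. By \eqref{eq:fuLip}, $\|c_\lambda\|_{L^\infty}\le L_f$, which replaces \eqref{eq:cBound}. The forcing $h_\lambda(x):=f(x,u_\lambda(x))-f(x^\lambda,u_\lambda(x))$ is estimated exactly as in \eqref{eq:kdiff}, now applied to the function $f(\cdot,t)$ with the frozen value $t=u_\lambda(x)\in[0,C_0]$. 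Since $|x^\lambda|\le|x|$, splitting along the ray and the sphere gives $(h_\lambda)_+\le\mathcal D(f(\cdot,t))\le\varepsilon$. This is the only place where the uniformity in $t$ of \eqref{eq:fdef} is needed.

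The nonlinearity also enters through Lemma~\ref{lem:lower} and Proposition~\ref{prop:boundaryreg}. Both apply to $q(x):=f(x,u(x))$, which satisfies $q\ge0$ and $\|q\|_\infty\le\|f\|_{L^\infty(B_1\times[0,C_0])}$. They give $c_*(1-|x|)^s\le u\le C_H(1-|x|)^s$ and the global $C^s$ bound, with constants depending only on the stated data.

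With these ingredients, the three steps of the proof of Theorem~\ref{thm:main} go through unchanged.
\begin{enumerate}[label=\textup{(\roman*)}]
\item Start the planes near $1$ using Proposition~\ref{prop:maxprinciples}(i).
\item Show the critical position satisfies $\lambda_*\le1/4$. This uses the shifted function $v=u_{\lambda_*}-u+A\varepsilon$, which satisfies \eqref{eq:shiftanti} and $(-\Delta)^s v+c_{\lambda_*}^+v\ge-C\varepsilon$ with $C$ depending on $L_f,C_0$. We then apply Theorem~\ref{thm:propagation}, a positive ball near $(1-2\eta)e_1$, and the small-volume principle Proposition~\ref{prop:maxprinciples}(ii).
\item Obtain the scaled bound $\lambda_*\le C\varepsilon^{1/(s+\beta)}$, then reflect through the origin and deduce radial monotonicity. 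This yields $\mathcal D(u)\le C\varepsilon^{\gamma}$ with $\gamma=s/(s+\beta)$.
\end{enumerate}

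The point requiring care is the justification that $(-\Delta)^s u_\lambda=f(x^\lambda,u_\lambda)$ holds in $\Sigma_\lambda$. For $x\in\Sigma_\lambda\cap B_1$ we need $x^\lambda\in B_1$, which is true because $|x^\lambda|\le|x|$ when $\lambda\ge0$. We also need the sign $g\ge0$ to have been used only through $f\ge0$, namely in the lower bound of Lemma~\ref{lem:lower} and in bounding the forcing one-sidedly. Once these are confirmed, every constant depends only on $n,s,C_0,L_f,\|f\|_\infty$, and the estimate \eqref{eq:nonsepest} follows.
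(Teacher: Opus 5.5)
Your proposal is correct and follows the paper's proof essentially verbatim: the only changes from Theorem~\ref{thm:main} are the difference-quotient coefficient $c_\lambda$ with $|c_\lambda|\le L_f$ and the one-sided forcing bound $\bigl(f(x,u_\lambda)-f(x^\lambda,u_\lambda)\bigr)_+\le\mathcal D_x(f;C_0)$, obtained by freezing $t=u_\lambda(x)$; after that the moving-plane argument runs unchanged. One small correction: in the nonseparable setting the sign $f\ge0$ is needed only for Lemma~\ref{lem:lower}, not for the one-sided forcing bound, which holds for $f$ of any sign.
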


\begin{proof}
Set $\varepsilon:=\mathcal D_x(f;C_0)$. If $\varepsilon=0$, then $x\mapsto f(x,t)$ is radial and nonincreasing for every $t\in[0,C_0]$. Thus, $\mathcal D(u)=0$. If $\varepsilon$ is larger than a fixed positive threshold, \eqref{eq:nonsepest} follows from $\mathcal D(u)\le2C_0$ after enlarging $C$. We may therefore work in the same small-defect regime as in the proof of Theorem~\ref{thm:main}.

In the remainder of the proof, we only show the steps that differ from the proof of Theorem \ref{thm:main}. 

In a cap $\Sigma_\lambda$, set $w_\lambda=u-u_\lambda$. If $u(x)\ne u_\lambda(x)$, define
\begin{equation}\label{eq:clambda-4.2}
c_\lambda(x):=
\begin{cases}
-\frac{f(x,u(x))-f(x,u_\lambda(x))}{u(x)-u_\lambda(x)},&u(x)\ne u_\lambda(x),\\[8pt]
0,&u(x)=u_\lambda(x).
\end{cases}
\end{equation}
The assumption on $f$ gives $|c_\lambda|\le L_f$. Similarly, from \eqref{eq:wEq}, we obtain
\begin{equation}\label{eq:nonsep-reflected}
(-\Delta)^sw_\lambda+c_\lambda w_\lambda=f(x,u_\lambda)-f(x^\lambda,u_\lambda).
\end{equation}
For $x\in\Sigma_\lambda$ one has $|x^\lambda|\le|x|$. Applying the decomposition in \eqref{eq:kdiff} to the function $f(\cdot,t)$ with $t=u_\lambda(x)\in[0,C_0]$ gives
\begin{equation}\label{eq:nonsep-force}
\bigl(f(x,u_\lambda)-f(x^\lambda,u_\lambda)\bigr)_+\le\mathcal D_x(f;C_0).
\end{equation}
Replacing \eqref{eq:cBound} by $|c_\lambda|\le L_f$ and \eqref{eq:forcing} by \eqref{eq:nonsep-force}, we obtain
\[
\lambda_*^{s+\beta}\le C\mathcal D_x(f;C_0).
\]
from the proof of Theorem \ref{thm:main}.
\end{proof}

Signed nonseparable sources can be treated in the same framework once the lower boundary is imposed explicitly, as in Theorem~\ref{thm:signchanging}.

\begin{corollary}\label{cor:signed-nonseparable}
Let $f:B_1\times[0,C_0]\to\R$ be bounded, continuous, and uniformly Lipschitz in its second variable with constant $L_f$. Let $F\in C([0,1];[0,\infty))$ satisfy $F(0)=0$ and $F(t)>0$ for $t>0$, and let $u$ be a classical  solution of
\[
(-\Delta)^su=f(x,u)\quad\text{in }B_1,\qquad u=0\quad\text{in }\R^n\setminus B_1,
\]
such that for all $x\in B_{1}$, $$F(1-|x|)\le u(x)\le C_0.$$ 
Then
\[
\mathcal D(u)\le\omega_F\bigl(\operatorname{osc}_x(f;C_0)\bigr),
\]
where $\omega_F(t)\to0$ as $t\downarrow0$ and the modulus depends only on
$n,s,C_0,F,L_f$ and $\|f\|_\infty$.
\end{corollary}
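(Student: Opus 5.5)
The plan is to run the proof of Theorem~\ref{thm:signchanging} verbatim, with the separable structure replaced by the nonseparable bookkeeping already used in Theorem~\ref{thm:nonseparable}. Set $\varepsilon:=\operatorname{osc}_x(f;C_0)$.

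\emph{Trivial regimes.} If $\varepsilon=0$, then $f(x,t)=f_0(t)$ is independent of $x$. The qualitative theorem \cite[Theorem~1.1]{JW16} then gives $\mathcal D(u)=0$; it applies because $u\ge0$ lies in $X_0^s(B_1)$ and $f_0$ is Lipschitz. If $\varepsilon$ exceeds a fixed threshold, the bound $\mathcal D(u)\le 2C_0$ is absorbed into $\omega_F$. So we may assume $0<\varepsilon\le\varepsilon_0$.

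\emph{Reflected equation.} With $c_\lambda$ as in \eqref{eq:clambda-4.2}, we have $|c_\lambda|\le L_f$, and \eqref{eq:nonsep-reflected} holds with right-hand side $f(x,u_\lambda)-f(x^\lambda,u_\lambda)$. Since $u_\lambda(x)\in[0,C_0]$, this forcing satisfies
\[
|f(x,u_\lambda)-f(x^\lambda,u_\lambda)|\le\operatorname{osc}_{B_1}f(\cdot,u_\lambda(x))\le\varepsilon,
\]
which is the analogue of $|k-k_\lambda|\le\varepsilon$. Only this two-sided bound is used, so no sign condition on $f$ is needed. Consequently every forcing estimate entering Proposition~\ref{prop:maxprinciples} and Theorem~\ref{thm:propagation} is $O(\varepsilon)$, with constants depending on $L_f$ in place of $M_0$.

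\emph{Regularity and the start of the moving plane.} Proposition~\ref{prop:boundaryreg} applies with $q=f(x,u)$ and $\|q\|_\infty\le\|f\|_\infty$. It gives the global $C^s$ bound and the upper decay $u\le C_H(1-|x|)^s$, i.e.\ \eqref{eq:upperMain}. The lower bound $u\ge F(1-|x|)$ is now a hypothesis, which is why Lemma~\ref{lem:lower} is not needed; indeed that lemma requires $q\ge0$ and would fail here. The narrow-region principle then shows $[\lambda_1,1)\subset\Lambda$, and we set $\lambda_*:=\inf\Lambda$.

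\emph{Step $\lambda_*\le 1/4$.} Argue by contradiction exactly as in the proof of Theorem~\ref{thm:signchanging}. Fix $\eta$ by \eqref{eq:eta-2}, use $u_{\lambda_*}-u\ge m_*/2$ on $B_{\eta/2}(z_\eta)$ as in \eqref{udiff-2}, and apply Theorem~\ref{thm:propagation} to $v=u_{\lambda_*}-u+A\varepsilon$. This $v$ is nonnegative on $H^+_{\lambda_*}$ and shifted-antisymmetric, and it satisfies $(-\Delta)^s v+c^+_{\lambda_*}v\ge -C\varepsilon$. Theorem~\ref{thm:propagation} gives a positive lower bound on the core $\Sigma_{\lambda_*,\eta}$. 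Continuity of $u$ in the plane parameter, together with the small-volume principle on $D_\mu$, then lets the plane move slightly past $\lambda_*$, contradicting its minimality.

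\emph{Quantitative step.} Use the envelope $\underline F$, the scale $\eta(r)$ and the function $\Psi(r)=\eta(r)^\beta\underline F(r)$ from \eqref{eq:Flower}--\eqref{eq:PsiF}. The same contradiction argument gives $\Psi(\lambda_*)\le C\varepsilon$, hence $\lambda_*\le r_F(\varepsilon)\to0$ as $\varepsilon\downarrow0$.

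\emph{Conclusion.} Repeating the reflection-through-the-origin and radial-monotonicity comparisons of Step~III in all directions yields
\[
\mathcal D(u)\le C\bigl(\varepsilon+r_F(\varepsilon)^s\bigr)=:\omega_F(\varepsilon).
\]

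\emph{Main obstacle.} The only point needing care is the core estimate from Theorem~\ref{thm:propagation}. Its hypotheses depend on $c$ only through $\sup c_+$, here bounded by $L_f$. The sign of $f$ enters nowhere except the lower boundary bound, which is now assumed. Hence all constants depend only on $n,s,C_0,F,L_f$ and $\|f\|_\infty$, as claimed.
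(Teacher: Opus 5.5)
Your proposal is correct and follows the route the paper intends. The paper gives no separate proof, only the remark that signed nonseparable sources fit the framework of Theorem~\ref{thm:signchanging}, and you carry this out by combining that proof with the bookkeeping of Theorem~\ref{thm:nonseparable}: the forcing $f(x,u_\lambda)-f(x^\lambda,u_\lambda)$ is bounded by $\operatorname{osc}_x(f;C_0)$, $|c_\lambda|\le L_f$ replaces $M_0$, and the assumed bound $F(1-|x|)\le u$ replaces Lemma~\ref{lem:lower}. One small refinement: only the one-sided bound $\bigl(f(x,u_\lambda)-f(x^\lambda,u_\lambda)\bigr)_+\le\varepsilon$ is actually used, both in Proposition~\ref{prop:maxprinciples} and in the supersolution inequality for $v$, so your two-sided estimate is more than is needed.
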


\subsection{The power scale and the linear-rate question}
Nevertheless, there is a simple test for the admissible power scale: no uniform exponent larger than one is possible, already for the linear Dirichlet problem obtained by taking $g\equiv1$.

\begin{proposition}\label{prop:powerceiling}
Let $n\ge2$ and $0<s<1$. For all $0<\varepsilon\le1/2$, there exist a constant $C_0\ge1$ and a family of smooth positive coefficients $k_\varepsilon$, with $\frac12\le k_\varepsilon\le\frac32$ and corresponding positive classical solutions $u_\varepsilon$ of
\[
(-\Delta)^su_\varepsilon=k_\varepsilon(x)\quad\text{in }B_1,\qquad u_\varepsilon=0\quad\text{in }\R^n\setminus B_1,
\]
satisfying the normalization \eqref{eq:norm} with this fixed $C_0$, such that
\begin{equation}\label{eq:linear-lower}
\mathcal D(k_\varepsilon)=c_1\varepsilon,\qquad\mathcal D(u_\varepsilon)\ge c_2\varepsilon
\end{equation}
for constants $c_1,c_2>0$ independent of $\varepsilon$. Consequently, there is no estimate
\[
\mathcal D(u)\le C\mathcal D(k)^\alpha
\]
can hold uniformly under the hypotheses of Theorem~\ref{thm:main} with any $\alpha>1$.
\end{proposition}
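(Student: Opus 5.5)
The plan is to use a linear perturbation of the torsion problem and exploit linearity of the solution map. Let $\psi:=(-\Delta)^{-s}1$ denote the torsion function of $B_1$. By Proposition~\ref{prop:green}, or by the explicit formula, $\psi(x)=\tau_{n,s}(1-|x|^2)_+^s$ with an explicit constant $\tau_{n,s}>0$. Fix a smooth function $\phi$ on $\overline{B_1}$ with $|\phi|\le1$ that is \emph{not} radial, for instance $\phi(x)=x_1$. Set
\[
k_\varepsilon:=1+\varepsilon\phi,
\]
so that $\tfrac12\le k_\varepsilon\le\tfrac32$ for $\varepsilon\le1/2$. Since the radial constant $1$ has zero defect, $\mathcal D(k_\varepsilon)=\varepsilon\,\mathcal D(\phi)$ by homogeneity. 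For $\phi=x_1$ both terms of \eqref{eq:raddef} are explicit: the angular oscillation is $\sup_r 2r=2$, and the radial increase is at most $1$. This gives $c_1=\mathcal D(x_1)\in(0,\infty)$, which is the first relation in \eqref{eq:linear-lower}.

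Next I define
\[
u_\varepsilon:=\psi+\varepsilon\varphi,\qquad \varphi:=(-\Delta)^{-s}\phi,
\]
where $\varphi$ is the Green potential from Proposition~\ref{prop:green}. Linearity gives $(-\Delta)^su_\varepsilon=k_\varepsilon$ with zero exterior data. Interior smoothness of the solution with smooth right-hand side gives $u_\varepsilon\in C^{1,1}_{\rm loc}(B_1)$, and the solution lies in $X^s_0(B_1)$, so $u_\varepsilon$ is a classical solution. Positivity follows because $k_\varepsilon\ge\frac12$ and the Green function is positive, so $u_\varepsilon\ge\frac12\psi>0$. For the normalization, $\frac12\psi\le u_\varepsilon\le\frac32\psi$ gives $\|u_\varepsilon\|_\infty\in[\frac12\tau_{n,s},\frac32\tau_{n,s}]$, so a fixed $C_0$ works.

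For the lower bound on $\mathcal D(u_\varepsilon)$, note that $\psi$ is radial, so $\osc_{\partial B_r}u_\varepsilon=\varepsilon\,\osc_{\partial B_r}\varphi$. It therefore suffices to show $\varphi$ is not radial, i.e.\ $\osc_{\partial B_{r_0}}\varphi=:c_2>0$ for some $r_0$. With $\phi=x_1$ this is clean: $\varphi$ is odd in $x_1$ because $G$ is invariant under the reflection $x\mapsto x^0$. Hence $\varphi(re_1)=-\varphi(-re_1)$. Moreover $\varphi\not\equiv0$, since $(-\Delta)^s\varphi=x_1\not\equiv0$. Choosing a point $x_0$ with $\varphi(x_0)\ne0$ and $r_0=|x_0|$, the antipodal-reflected point $x_0^0$ lies on the same sphere, so $\osc_{\partial B_{r_0}}\varphi\ge2|\varphi(x_0)|>0$. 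Thus $\mathcal D(u_\varepsilon)\ge c_2\varepsilon$.

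The "consequently" part follows by letting $\varepsilon\downarrow0$. An estimate $c_2\varepsilon\le C(c_1\varepsilon)^\alpha$ with $\alpha>1$ fails for small $\varepsilon$, and all the data ($C_0$, $\|k_\varepsilon\|_\infty\le\frac32$, and $g\equiv1$ with Lipschitz norm bounded) are uniform. The only mildly delicate step is verifying that $u_\varepsilon$ is a classical solution in the paper's sense, namely membership in $C^{1,1}_{\rm loc}$ and $X^s_0$. I would handle this by citing standard interior regularity for $(-\Delta)^s$ with smooth right-hand side, together with the weak-solution characterization of Proposition~\ref{prop:green}.
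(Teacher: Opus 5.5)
Your proposal is correct and follows essentially the same route as the paper: perturb the torsion problem by $k_\varepsilon=1+\varepsilon\phi$, then use linearity and positivity of the Green function to get $\tfrac12\psi\le u_\varepsilon\le\tfrac32\psi$ and $\osc_{\partial B_r}u_\varepsilon=\varepsilon\,\osc_{\partial B_r}\varphi$. The only difference is cosmetic. The paper takes a general nonradial $h\in C_c^\infty(B_1)$ and gets non-radiality of its potential from rotation invariance of $(-\Delta)^s$, while your concrete choice $\phi=x_1$ gets it directly from oddness of $\varphi$ under $x\mapsto x^0$, and also gives the explicit value $c_1=\mathcal D(x_1)=3$.
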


\begin{proof}
Choose a nonradial $h\in C_c^\infty(B_1)$ with $\|h\|_{L^\infty(B_1)}\le1$. Define $G(x,y)=G_1(x,y)$ be the Green function in \eqref{eq:GreenR}. Then, set
\begin{equation}
G_s[f](x):=\int_{B_1}G(x,y)f(y)\,dy.
\end{equation}
Denote
\[
u_0:=\mathcal G_s[1],\qquad v:=\mathcal G_s[h].
\]
Thus, $u_0>0$ is radial, while $v$ cannot be radial. Indeed, if $v$ were radial, then $h=(-\Delta)^sv$ was radial in $B_1$, contrary to the choice of $h$. Hence
\[
c_2:=\sup_{0<r<1}\osc_{\partial B_r}v>0.
\]

For $0<\varepsilon\le1/2$, define
\[
k_\varepsilon:=1+\varepsilon h,\qquad u_\varepsilon:=u_0+\varepsilon v=\mathcal G_s[k_\varepsilon].
\]
Then $1/2\le k_\varepsilon\le3/2$. 
Since $G(x,y)>0$, the comparison principle gives
\[
\frac12u_0\le u_\varepsilon\le\frac32u_0\qquad\text{in }B_1.
\]
Thus, with
\[
C_0:=\max\left\{1,\frac{2}{\|u_0\|_\infty},\frac32\|u_0\|_\infty\right\},
\]
the two-sided $L^\infty$ normalization holds uniformly in $\varepsilon$. Since $k_\varepsilon$ is smooth in $B_1$, interior Schauder regularity gives $u_\varepsilon\in C^\infty_{\rm loc}(B_1)$(see \cite{Stinga19}). Thus, $u_\varepsilon$ is a classical solution of \eqref{eq:main}.

Since
\[
\mathcal D(k_\varepsilon)=\varepsilon\mathcal D(h)=:c_1\varepsilon,
\]
where $c_1>0$ because $h$ is nonradial. $u_0$ is radius symmetric, so
\[
\osc_{\partial B_r}u_\varepsilon=\varepsilon\osc_{\partial B_r}v.
\]
Taking the supremum over $r$ gives $\mathcal D(u_\varepsilon)\ge c_2\varepsilon$, which proves \eqref{eq:linear-lower}. If a uniform estimate with $\alpha>1$ held, then $c_2\varepsilon\le C(c_1\varepsilon)^\alpha$ for all small $\varepsilon$, an impossibility as $\varepsilon\downarrow0$.
\end{proof}

\begin{remark}
Proposition~\ref{prop:powerceiling} shows that a power exponent cannot exceed $1$, but it is an interesting question to identify the universal optimal power  
\begin{equation}\label{eq:linear-question}
\mathcal D(u)\le C\mathcal D(k).
\end{equation}
\end{remark}
{\bf{Acknowledgments.}} 
All authors are partially supported by the National Natural Science Foundation of China (Grant No. W2531006, 12250710674 and 12031012) and the Institute of Modern Analysis-A Frontier Research Center of Shanghai.

\medskip
{\bf AI disclosure statement.}
The authors acknowledge the use of OpenAI to assist with language editing and manuscript preparation. All mathematical results and proofs were reviewed and verified by the authors, who take full
responsibility for the content of the manuscript.

\medskip
{\bf Data availability statement.} No data was used for the research described in the article.

\medskip
{\bf Conflict of interest statement.} On behalf of all authors, the corresponding author states that there is no conflict of interest.


\begin{thebibliography}{99}

\bibitem{ABR99}
A.~Aftalion, J.~Busca and W.~Reichel, \emph{Approximate radial symmetry for overdetermined boundary value problems}, Adv. Differential Equations \textbf{4} (1999), no.~6, 907--932.

\bibitem{Alex62}
A.~D.~Alexandrov,\emph{A characteristic property of spheres}, Ann. Mat. Pura Appl. (4) \textbf{58} (1962), 303--315.

%\bibitem{AC00}
%L. Ambrosio and X. Cabr\'e, \emph{Entire solutions of semilinear elliptic equations in $\bold R^3$ and a conjecture of De Giorgi}, J. Amer. Math. Soc. {\bf 13} (2000), no.~4, 725--739.

\bibitem{BMS18}
B.~Barrios, L.~Montoro and B.~Sciunzi, \emph{On the moving plane method for nonlocal problems in bounded domains}, J. Anal. Math. \textbf{135} (2018), no.~1, 37--57.

%\bibitem{BCN97}
%H. Berestycki, L.~\'A. Caffarelli and L. Nirenberg, \emph{Monotonicity for elliptic equations in unbounded Lipschitz domains}, Comm. Pure Appl. Math. {\bf 50} (1997), no.~11, 1089--1111.

\bibitem{BN91}
H.~Berestycki and L.~Nirenberg, \emph{On the method of moving planes and the sliding method}, Bol. Soc. Brasil. Mat. (N.S.) \textbf{22} (1991), no.~1, 1--37.

\bibitem{Be96}
J. Bertoin, L\'{e}vy Processes, \emph{Cambridge Tracts in Mathematics}, 121 Cambridge University Press, Cambridge, 1996.

%\bibitem{BLW05}
%M.~Birkner, J.~A.~L\'opez-Mimbela and A.~Wakolbinger, \emph{Comparison results and steady states for the Fujita equation with fractional Laplacian}, Ann. Inst. H. Poincar\'e Probab. Statist. \textbf{41} (2005), no.~1, 83--97.

\bibitem{Biswas26}
S.~Biswas, \emph{Symmetry and approximate symmetry for solutions of mixed local--nonlocal singular equations}, J. Differential Equations \textbf{480} (2026), Paper No. 114658.

\bibitem{BG90}
J. P. Bouchard, A. Georges, \emph{Anomalous diffusion in disordered media, Statistical mechanics, models and physical applications}, Physics reports, 195 (1990).

\bibitem{Bucur16}
C.~Bucur, \emph{Some observations on the Green function for the ball in the fractional Laplace framework}, Commun. Pure Appl. Anal. \textbf{15} (2016), no.~2, 657--699.

\bibitem{CGS89}
L.~\'A. Caffarelli, B. Gidas and J. Spruck, \emph{Asymptotic symmetry and local behavior of semilinear elliptic equations with critical Sobolev growth}, Comm. Pure Appl. Math. {\bf 42} (1989), no.~3, 271--297.

%\bibitem{CS07}
%L.~Caffarelli and L.~Silvestre, \emph{An extension problem related to the fractional Laplacian}, Comm. Partial Differential Equations \textbf{32} (2007), no.~7--9, 1245--1260.

\bibitem{CV10}
L.~\'A. Caffarelli and A.~F. Vasseur, \emph{Drift diffusion equations with fractional diffusion and the quasi-geostrophic equation}, Ann. of Math. (2) {\bf 171} (2010), no.~3, 1903--1930.

\bibitem{CCG25}
G.~Ciraolo, M.~Cozzi and M.~Gatti, \emph{A quantitative study of radial symmetry for solutions to semilinear equations in $\R^n$}, J. Math. Pures Appl. \textbf{204} (2025), Paper No. 103755, 45 pp.

\bibitem{CCPP24}
G.~Ciraolo, M.~Cozzi, M.~Perugini and L.~Pollastro, \emph{A quantitative version of the Gidas--Ni--Nirenberg theorem}, J. Funct. Anal. \textbf{287} (2024), no.~9, Paper No. 110585, 29 pp.

\bibitem{CDPPV23}
G.~Ciraolo, S.~Dipierro, G.~Poggesi, L.~Pollastro and E.~Valdinoci, \emph{Symmetry and quantitative stability for the parallel surface fractional torsion problem}, Trans. Amer. Math. Soc. \textbf{376} (2023), no.~5, 3515--3540.

\bibitem{CFMN18}
G.~Ciraolo, A.~Figalli, F.~Maggi and M.~Novaga, \emph{Rigidity and sharp stability estimates for hypersurfaces with constant and almost-constant nonlocal mean curvature}, J. Reine Angew. Math. \textbf{741} (2018), 275--294.

\bibitem{CL25}
G.~Ciraolo and X.~Li,\emph{A quantitative symmetry result for $p$-Laplace equations with discontinuous nonlinearities}, Math. Ann. \textbf{392} (2025), no.~2, 2131--2155.

\bibitem{CMV16}
G.~Ciraolo, R.~Magnanini and V.~Vespri, \emph{H\"older stability for Serrin's overdetermined problem}, Ann. Mat. Pura Appl. (4) \textbf{195} (2016), no.~4, 1333--1345.

\bibitem{CR18}
G. Ciraolo and A. Roncoroni, \emph{The method of moving planes: a quantitative approach}, in {\it Bruno Pini Mathematical Analysis Seminar 2018}, 41--77, Bruno Pini Math. Anal. Semin., \textbf{9}, Univ. Bologna, Alma Mater Stud., Bologna
%G.~Ciraolo and A.~Roncoroni,\emph{The method of moving planes: a quantitative approach}, Bruno Pini Math. Anal. Semin. \textbf{9} (2018), 41--77.

\bibitem{CV18}
G.~Ciraolo and L.~Vezzoni, \emph{A sharp quantitative version of Alexandrov's theorem via the method of moving planes}, J. Eur. Math. Soc. (JEMS) \textbf{20} (2018), no.~2, 261--299.

\bibitem{CGY26}
H.~B. Chen, C. Gui and R. Yao, \emph{Uniqueness of critical points of the second Neumann eigenfunctions on triangles}, Invent. Math. {\bf 244} (2026), no.~1, 299--353.

%\bibitem{CGL26}
%W.~Chen, Y.~Guo and C.~Li, \emph{Methods in studying qualitative properties of fractional equations}, preprint, arXiv:2601.19783, 2026.

\bibitem{CL91}
W. Chen and C. Li, \emph{Classification of solutions of some nonlinear elliptic equations}, Duke Math. J. {\bf 63} (1991), no.~3, 615--622

%\bibitem{CL10} 
%W. Chen and C. Li, \emph{ Methods on nonlinear elliptic equations}, AIMS Series on Differential Equations \& Dynamical Systems, 4, Am. Inst. Math. Sci. (AIMS), Springfield, MO, 2010.

%\bibitem{CL18}
%W. Chen and C. Li, \emph{Maximum principles for the fractional $p$-Laplacian and symmetry of solutions}, Adv. Math. {\bf 335} (2018), 735--758.

%\bibitem{CLG17}
%W. Chen, C. Li and G. Li, \emph{Maximum principles for a fully nonlinear fractional order equation and symmetry of solutions}, Calc. Var. Partial Differential Equations {\bf 56} (2017), no.~2, Paper No. 29, 18 pp.

\bibitem{CLL17}
W.~Chen, C.~Li and Y.~Li, \emph{A direct method of moving planes for the fractional Laplacian}, Adv. Math. \textbf{308} (2017), 404--437.

\bibitem{CL005}
W. Chen, C. Li and B. Ou, \emph{Qualitative properties of solutions for an integral equation}, Discrete Contin. Dyn. Syst. {\bf 12} (2005), no.~2, 347--354.

\bibitem{CLO06}
W.~Chen, C.~Li and B.~Ou, \emph{Classification of solutions for an integral equation}, Comm. Pure Appl. Math. \textbf{59} (2006), no.~3, 330--343.

\bibitem{CLZ17}
W. Chen, Y. Li and R. Zhang, \emph{A direct method of moving spheres on fractional order equations}, J. Funct. Anal. {\bf 272} (2017), no.~10, 4131--4157.

\bibitem{C06}
P. Constantin, \emph{Euler equations, Navier-Stokes equations and turbulence, in Mathematical Foundation of Turbulent Viscous Flows}, Vol. 1871 of Lecture Notes in Math. Springer, Berlin, 2006.

%\bibitem{Cheng17}
%T.~Cheng, \emph{Monotonicity and symmetry of solutions to fractional Laplacian equations}, Discrete Contin. Dyn. Syst. \textbf{37} (2017), no.~7, 3587--3599.

\bibitem{DQ18}
W. Dai and G. Qin, \emph{Classification of nonnegative classical solutions to third-order equations}, Adv. Math. {\bf 328} (2018), 822--857.
%\bibitem{DQ23}
%W. Dai and G. Qin, Liouville-type theorems for fractional and higher-order H\'enon-Hardy type equations via the method of scaling spheres, Int. Math. Res. Not. IMRN {\bf 2023}, no.~11, 9001--9070.

\bibitem{Dancer92}
E.~N.~Dancer, \emph{Some notes on the method of moving planes}, Bull. Austral. Math. Soc. \textbf{46} (1992), no.~3, 425--434.

\bibitem{DKW11}
M.~A. del~Pino, M. Kowalczyk and J. Wei, \emph{On De Giorgi's conjecture in dimension $N\geq 9$}, Ann. of Math. (2) {\bf 174} (2011), no.~3, 1485--1569.

\bibitem{DCKP14}
A.~Di Castro, T.~Kuusi and G.~Palatucci,\emph{Nonlocal Harnack inequalities}, J. Funct. Anal. \textbf{267} (2014), no.~6, 1807--1836.

\bibitem{DGSPV25}
S.~Dipierro, J.~Gon\c{c}alves da Silva, G.~Poggesi and E.~Valdinoci, \emph{A quantitative Gidas--Ni--Nirenberg-type result for the $p$-Laplacian via integral identities}, J. Funct. Anal. \textbf{289} (2025), no.~10, Paper No. 111108, 38 pp.

\bibitem{DKTV25}
S.~Dipierro, M.~Kwa\'snicki, J.~Thompson and E.~Valdinoci, \emph{The nonlocal Harnack inequality for antisymmetric solutions: an approach via Bochner's relation and harmonic analysis}, Comm. Partial Differential Equations \textbf{50} (2025), no.~8, 1074--1098.

\bibitem{DPTV23Serrin}
S.~Dipierro, G.~Poggesi, J.~Thompson and E.~Valdinoci, \emph{Quantitative stability for overdetermined nonlocal problems with parallel surfaces and investigation of the stability exponents}, J. Math. Pures Appl. (9) {\textbf 188} (2024), 273--319.

\bibitem{DPTV24}
S.~Dipierro, G.~Poggesi, J.~Thompson and E.~Valdinoci, \emph{The role of antisymmetric functions in nonlocal equations}, Trans. Amer. Math. Soc. \textbf{377} (2024), no.~3, 1671--1692.

%\bibitem{DPTV24exp}
%S.~Dipierro, G.~Poggesi, J.~Thompson and E.~Valdinoci, \emph{Quantitative stability for overdetermined nonlocal problems with parallel surfaces and investigation of the stability exponents}, J. Math. Pures Appl. \textbf{188} (2024), 273--319.

\bibitem{DTV23}
S.~Dipierro, J.~Thompson and E.~Valdinoci, \emph{On the Harnack inequality for antisymmetric $s$-harmonic functions}, J. Funct. Anal. \textbf{285} (2023), no.~1, Paper No. 109917, 49 pp.

%\bibitem{DPV12}
%E.~Di Nezza, G.~Palatucci and E.~Valdinoci, \emph{Hitchhiker's guide to the fractional Sobolev spaces}, Bull. Sci. Math. \textbf{136} (2012), no.~5, 521--573.

\bibitem{Dou16}
M.~Dou, \emph{A direct method of moving planes for fractional Laplacian equations in the unit ball}, Commun. Pure Appl. Anal. \textbf{15} (2016), no.~5, 1797--1807.

%\bibitem{DKK}
%B. Dyda, A. Kuznetsov and M. Kwa\'snicki,\emph{Fractional Laplace operator and Meijer G-function},Constr. Approx. \textbf{45} (2017), no.~3, 427--448.

\bibitem{FQJ12}
P.~L. Felmer, A. Quaas and J. Tan, \emph{Positive solutions of the nonlinear Schr\"odinger equation with the fractional Laplacian}, Proc. Roy. Soc. Edinburgh Sect. A {\bf 142} (2012), no.~6, 1237--1262.

%\bibitem{FS2020}
%A. Figalli and J. Serra, \emph{On stable solutions for boundary reactions: a De Giorgi-type result in dimension $4+1$}, Invent. Math. {\bf 219} (2020), no.~1, 153--177

%\bibitem{FJ15}
%M.~M.~Fall and S.~Jarohs,\emph{Overdetermined problems with fractional Laplacian}, ESAIM Control Optim. Calc. Var. \textbf{21} (2015), no.~4, 924--938.

\bibitem{FW14}
P.~Felmer and Y.~Wang, \emph{Radial symmetry of positive solutions to equations involving the fractional Laplacian}, Commun. Contemp. Math. \textbf{16} (2014), no.~1, 1350023, 24 pp.

\bibitem{Gatti25}
M.~Gatti, \emph{Approximate radial symmetry for $p$-Laplace equations via the moving planes method}, Calc. Var. Partial Differential Equations \textbf{64} (2025), Paper No. 261, 56 pp.

\bibitem{GSW26}
M.~Gatti, J.~Scheuer and T.~Weth, \emph{Fractional Dirichlet problems with an overdetermined non-local Neumann condition}, Proc. Lond. Math. Soc. (3) \textbf{132} (2026), no.~4,  Paper No. e70149, 34 pp.

%\bibitem{GG98}
%N.~A. Ghoussoub and C. Gui, \emph{On a conjecture of De Giorgi and some related problems}, Math. Ann. {\bf 311} (1998), no.~3, 481--491.

%\bibitem{GG03}
%N.~A. Ghoussoub and C. Gui, On De Giorgi's conjecture in dimensions 4 and 5, Ann. of Math. (2) {\bf 157} (2003), no.~1, 313--334.

\bibitem{GNN79}
B.~Gidas, W.-M.~Ni and L.~Nirenberg,\emph{Symmetry and related properties via the maximum principle}, Comm. Math. Phys. \textbf{68} (1979), no.~3, 209--243.

\bibitem{GNN81}
B.~Gidas, W.-M.~Ni and L.~Nirenberg, \emph{Symmetry of positive solutions of nonlinear elliptic equations in $\R^n$}, in: Mathematical Analysis and Applications, Part A, Adv. Math. Suppl. Stud. \textbf{7a}, Academic Press, New York-London, 1981, 369--402.

%\bibitem{GSGL23}
%C. Giulio, D. Serena, P. Giorgio, P. Luigim and V. Enrico, \emph{Symmetry and quantitative stability for the parallel surface fractional torsion problem}, Trans. Amer. Math. Soc. {\bf 376} (2023), no.~5, 3515--3540

%\bibitem{GM18}
%C. Gui and A. Moradifam, \emph{The sphere covering inequality and its applications}, Invent. Math. {\bf 214} (2018), no.~3, 1169--1204.

\bibitem{Jarohs16}
S.~Jarohs, \emph{Symmetry of solutions to nonlocal nonlinear boundary value problems in radial sets}, NoDEA Nonlinear Differential Equations Appl. \textbf{23} (2016), no.~3, Art. 32, 22 pp.

\bibitem{JW16}
S.~Jarohs and T.~Weth, \emph{Symmetry via antisymmetric maximum principles in nonlocal problems of variable order}, Ann. Mat. Pura Appl. (4) \textbf{195} (2016), no.~1, 273--291.

\bibitem{Li96}
C. Li, \emph{Local asymptotic symmetry of singular solutions to nonlinear elliptic equations}, Invent. Math. {\bf 123} (1996), no.~2, 221--231

\bibitem{LZ11}
G. Lu and J. Zhu, \emph{Symmetry and regularity of extremals of an integral equation related to the Hardy-Sobolev inequality}, Calc. Var. Partial Differential Equations {\bf 42} (2011), no.~3-4, 563--577.

%\bibitem{PS25}
%L.~Pollastro and N.~Soave, \emph{Antisymmetric maximum principles and Hopf's lemmas for the logarithmic Laplacian, with applications to symmetry results}, Ann. Mat. Pura Appl. (4) \textbf{204} (2025), 1827--1845.

\bibitem{ROS14}
X.~Ros-Oton and J.~Serra, \emph{The Dirichlet problem for the fractional Laplacian: regularity up to the boundary}, J. Math. Pures Appl. (9) \textbf{101} (2014), no.~3, 275--302.

%\bibitem{RJ14}
%X. Ros-Oton and J. Serra, \emph{The extremal solution for the fractional Laplacian}, Calc. Var. Partial Differential Equations {\bf 50} (2014), no.~3-4, 723--750.

%\bibitem{ROS16stable}
%X.~Ros-Oton and J.~Serra, \emph{Regularity theory for general stable operators}, J. Differential Equations \textbf{260} (2016), no.~12, 8675--8715.

%\bibitem{ROS17boundary}
%X.~Ros-Oton and J.~Serra, \emph{Boundary regularity estimates for nonlocal elliptic equations in $C^1$ and $C^{1,\alpha}$ domains}, Ann. Mat. Pura Appl. (4) \textbf{196} (2017), no.~5, 1637--1668.

\bibitem{Rosset94}
E.~Rosset,\emph{An approximate Gidas--Ni--Nirenberg theorem}, Math. Methods Appl. Sci. \textbf{17} (1994), no.~13, 1045--1052.

\bibitem{Savin09}
O.~V. Savin, Regularity of flat level sets in phase transitions, Ann. of Math. (2) {\bf 169} (2009), no.~1, 41--78.

\bibitem{Serrin71}
J.~Serrin, \emph{A symmetry problem in potential theory}, Arch. Rational Mech. Anal. \textbf{43} (1971), 304--318.

\bibitem{Stinga19}
P.~R.~Stinga,\emph{User's guide to the fractional Laplacian and the method of semigroups}, in: Fractional Differential Equations, Handbook of Fractional Calculus with Applications, Vol.~2, De Gruyter, Berlin--Boston, 2019, 235--266.

\bibitem{TZ06} 
V.~E. Tarasov and G.~M. Zaslavsky, \emph{Fractional dynamics of systems with long-range interaction}, Commun. Nonlinear Sci. Numer. Simul. {\bf 11} (2006), no.~8, 885--898.

\bibitem{WQY24}
Y. Wang, Y. Qiu and Q. Yin, \emph{The radial symmetry of positive solutions for semilinear problems involving weighted fractional Laplacians}, Acta Math. Sci. Ser. B (Engl. Ed.) {\bf 44} (2024), no.~3, 1020--1035

\bibitem{Zhang15}
L. Zhang, \emph{Symmetry of solutions to semilinear equations involving the fractional Laplacian}, Commun. Pure Appl. Anal. {\bf 14} (2015), no.~6, 2393--2409

\end{thebibliography}
\end{document}